\definecolor{red}{rgb}{0.7,0.15,0.15}
\definecolor{green}{rgb}{0,0.5,0}
\definecolor{blue}{rgb}{0,0,0.7}
\makeatletter \@addtoreset{equation}{section}
\newtheorem{theorem}{Theorem}[section]
\newtheorem{assumption}[theorem]{Assumption}
\newtheorem{example}[theorem]{Example}
\newtheorem{lemma}[theorem]{Lemma}
\newtheorem{proposition}[theorem]{Proposition}
\newtheorem{definition}[theorem]{Definition}
\newtheorem{remark}[theorem]{Remark}
\def \E{\mathbb{E}}
\def \F{\mathbb{F}}
\def \G{\mathbb{G}}
\def \J{\mathbb{J}}
\def \L{\mathbb{L}}
\def \N{\mathbb{N}}
\def \P{\mathbb{P}}
\def \Q{\mathbb{Q}}
\def \R{\mathbb{R}}
\def \V{\mathbb{V}}
\def \X{\mathbb{X}}
\def\Ac{{\cal A}}
\def\Bc{{\cal B}}
\def\Cc{{\cal C}}
\def\Ec{{\cal E}}
\def\Fc{{\cal F}}
\def\Lc{{\cal L}}
\def\Pc{{\cal P}}
\def\Sc{{\cal S}}
\def\Uc{{\cal U}}
\def\x{\times}
\def\eps{\varepsilon}
\def\Om{\Omega}
\def\om{\omega}
\def\0{\mathbf{0}}
\def \Ec{\mathcal{E}}
\def \Xh{\widehat{X}}
\def\normeL2#1{\left\|{#1}\right\|_{L^2}}
\def\1{\mathbf{1}}
\def \proof{{\noindent \bf Proof.\quad}}
\def \ep{\hbox{ }\hfill{ ${\cal t}$~\hspace{-5.1mm}~${\cal u}$}}
\def\b*{\begin{eqnarray*}}
\def\e*{\end{eqnarray*}}
\def\be{\begin{eqnarray}}
\def\ee{\end{eqnarray}}
\def\drm{\mathrm{d}}
\def\supp{\mbox{supp}}
\title{\bf Mean Field Game of Mutual Holding with common noise
      \thanks{This work benefitted from the financial support of the Chairs {\it Financial Risk} and {\it Finance and Sustainable Development}.}}
\author{Leila Bassou\thanks{Ecole Polytechnique, 
            CMAP,
            leila.bassou@polytechnique.edu}
            \and 
            Mao Fabrice Djete\thanks{Ecole Polytechnique, 
            CMAP,
            mao-fabrice.djete@polytechnique.edu. }
            \and 
            Nizar Touzi\thanks{New York University, 
            Tandon School of Engineering, nizar.touzi@nyu.edu. }}
\date{\today}
\begin{document}

\maketitle
 
\begin{abstract}
We consider the mean field game of cross--holding introduced in \citeauthor*{DjeteTouzi} \cite{DjeteTouzi} in the context where the equity value dynamics are affected by a common noise. In contrast with \cite{DjeteTouzi}, the problem exhibits the standard paradigm of mean--variance trade off. Our crucial observation is to search for equilibrium solutions of our mean field game among those models which satisfy an appropriate notion of no--arbitrage. Under this condition, it follows that the representative agent optimization step is reduced to a standard portfolio optimization problem with random endowment.  
\end{abstract}

\vspace{3mm}
\noindent {\bf Keywords.} McKean--Vlasov stochastic differential equation, mean field game, backward stochastic differential equations. 

\vspace{3mm}
\noindent {\bf MSC2010.} 60K35, 60H30, 91A13, 91A23, 91B30.
\section{Introduction}

Modeling financial interactions between economic entities is a crucial step in order to gain some understanding of the major question of financial stability of the economic sphere. The equity value of an economic entity depends on one hand on the revenues generated by its specific business structure. On the other hand, they also need to access to appropriate funding for their development. In addition to this first source of interaction through debt, economic entities may also interact with each other through equity cross--holding for risk diversification purpose by appropriate sharing of their profits--and--losses.

This paper follows the model introduced in \cite{DjeteTouzi} and \citeauthor*{DjeteGuoTouzi} \cite{DjeteGuoTouzi} where the interaction through debt is ignored. Similar to these works, we are interested in characterizing the Nash equilibria of the problem of optimal cross--holding. We consider a large population of symmetric agents who seek to optimize their equity value by optimally choosing the level of holding their peers, while undergoing the level of holding its own assets by them. This induces an optimal response function for each agent, and we search for a Nash equilibrium of this cross--holding problem by analyzing the fixed points of the optimal responses of all agents. 

Similar to \cite{DjeteTouzi}, we consider the mean field version of this problem which is more accessible due to the fact that the limiting model erases the impact of the representative agent on the population distribution because one single agent becomes negligible in the infinite population limit. 

Our main objective in this paper is to allow for correlated revenue structures of the population of firms. We model this by introducing a common factor of risk which affects their idiosyncratic risk process. Our main results are the following:
\begin{itemize}
\item The existence of a solution for the mean field game of cross holding is intimately related to an appropriate notion of no--arbitrage which must be satisfied at equilibrium. We obtain an explicit characterization of this condition in terms of the model ingredients, which is in the same spirit as the Heath--Jarrow--Morton \cite{heath1992bond} restriction on the drift in the interest rates modeling literature.
\item Under the no--arbitrage condition, we provide a characterization of the set of equilibrium solutions for the mean field game of cross--holding.
\item Explicit examples are obtained in the context of the Black--Scholes model with common--noise depending coefficients. 
\end{itemize}
To be more specific, let us start by the simple one--period model where the variation of the equity value $\Delta X^i:=X^1_1-X^i_0$ in a zero--interest rate context is given by
$$
\Delta X^i
=
b(X^i_0)
+\sigma(X^i_0)\varepsilon_i
+\sigma^0(X^i_0)\varepsilon^0
+\frac1N\sum_{1\le j\ne i\le N}\beta(X^i_0,X^j_0)\Delta X^j
                                    -\pi(X^j_0,X^i_0)\Delta X^i,
$$
for $i=1,\ldots,N$,
where $b,\sigma,\sigma^0$ are given deterministic functions, and $\eps^0,\eps_1,\ldots,\eps_N$ are independent centered random variables with unit variance, and the $\eps_i$'s are independent copies of some r.v. $\eps$. The corresponding mean field version models the variation of the representative equity value as:  
\begin{equation}\label{1periodMF}
\Delta X
=
b(X_0)
+\sigma(X_0)\varepsilon
+\sigma^0(X_0)\varepsilon^0
+\widehat\E^\mu\big[\beta(X_0,\widehat X_0)\Delta\widehat X\big]
-\widehat\E^\mu\big[\pi(X_0,\widehat X_0)\big]\Delta X,
\end{equation}
where we denoted $\widehat\E^\mu[f(X,\widehat X)]=\displaystyle\int_{\R} f(X,\widehat{x})\mu(\drm\widehat{x})$, and $\mu$ denotes the equilibrium conditional distribution of the population equity value $\widehat X=(\widehat X_0,\widehat X_1)$ conditional to the common noise $\varepsilon^0$. Given its performance criterion, the representative agent's optimal response $\widehat\beta(\mu,\pi)$ is the set of the solution of the performance maximization over $\beta$, given the environment $(\mu,\pi)$. A solution of the mean field game is a pair $(\mu,\pi)$ such that $\pi\in\widehat\beta(\mu,\pi)$ with $\mu$ as the distribution of the corresponding $X=(X_0,X_1)$.

In the continuous time version of this problem, the dynamics of the representative equity value process is defined by
\be\label{continuoustimeMF}
\drm X_t
&=&
\drm P_t
+\widehat\E^\mu\big[\beta_t(X_t,\widehat X_t)\drm\widehat X_t\big]
-\widehat\E^\mu\big[\pi_t(X_t,\widehat X_t)\big]\drm X_t,
\\
\drm P_t
&=&
b_t(X_t)\drm t
+\sigma_t(X_t)\drm W_t
+\sigma^0_t(X_t)\drm W^0_t,
\nonumber
\ee
where $W^0,W$ are independent scalar Brownian motions, and $\widehat\E^\mu$ denotes now the conditional expectation on the continuous paths space of the copy $\widehat X$ at equilibrium with respect to law $\mu$, conditional on the common noise $W^0$.

In the no--common noise situation $\sigma^0=0$ studied in \cite{DjeteTouzi,DjeteGuoTouzi}, the equilibrium distribution $\mu$ is a deterministic object and the solution of the problem is derived by guessing that the equilibrium dynamics of the environment is of the form $\drm\widehat X_t=B_t(\widehat X_t)\drm t+\Sigma_t(\widehat X_t)\drm\widehat W_t$. The critical observation is that under this guess, the continuous time dynamics of the controlled representative equity process is 
$$
\drm X_t
=
\drm P_t
+\widehat\E^\mu\big[\beta_t(X_t,\widehat X_t)B_t(\widehat X_t)\big]\drm t
-\widehat\E^\mu\big[\pi_t(X_t,\widehat X_t)\big]\drm X_t,
$$
where the ``$\drm\widehat W_t$'' term vanishes under the $\widehat\E^\mu-$expectation. We are then reduced to a problem of linear drift control leading to a standard bang--bang type of solution which consists in maximizing the instantaneous return only, and thus does not follow the usual paradigm in portfolio optimization of balancing mean and variance of returns.

In contrast, by allowing for the common noise $\eps^0$ and $W^0$ in the one period and the continuous time model, respectively, the situation becomes more intriguing due to the randomness of the conditional distribution $\mu$. The equilibrium dynamics of the environment are now expected to be of the form $\drm\widehat X_t=B_t(\widehat X_t)\drm t+\Sigma_t(\widehat X_t)\drm \widehat W_t+\Sigma^0_t(\widehat X_t)\drm W^0_t$, leading to the following continuous time dynamics of the controlled representative equity process:
$$
\drm X_t
=
\drm P_t
+\widehat\E^\mu\big[\beta_t(X_t,\widehat X_t)B_t(\widehat X_t)\big]\drm t
+\widehat\E^\mu\big[\beta_t(X_t,\widehat X_t)\Sigma^0_t(\widehat X_t)\big]\drm W^0_t
-\widehat\E^\mu\big[\pi_t(X_t,\widehat X_t)\big]\drm X_t.
$$
Notice that in the present context, unlike the no--common noise setting, the control $\beta$ impact both the drift and the diffusion coefficient, and we are therefore back to the usual paradigm of mean--variance tradeoff in portfolio optimization.

The first main results of this paper is that any equilibrium solution of the above mean field game of cross-holding must satisfy an appropriate notion of no--arbitrage, namely the no--increasing profit (NIP) condition as appeared in the previous literature, see e.g. \cite{Fontana}. Moreover, we provide a characterization of the NIP condition by means of a proportionality relation between instantaneous expected mean $b$ and the common noise volatility $\sigma^0$. Finally, by restricting the search of equilibria to those models which satisfy NIP condition, we provide a necessary and sufficient condition for the existence of an equilibrium solution of the mean field game which can be derived explicitly in some examples. These results are established both in the one--period and the continuous time model. 

Actually, in our context, the NIP condition reduces the representative agent's optimization step to a standard portfolio optimization problem for a single agent with random endowment. In particular, the strategic cross--holding activity within the population does not induce a mean field interaction in the equilibrium dynamics. This is another significant difference with the results of \cite{DjeteTouzi,DjeteGuoTouzi}. However, we observe that the mean field interaction in the equilibrium dynamics of \cite{DjeteTouzi,DjeteGuoTouzi} is fundamentally induced by the constraint on the strategies $\beta$ to take values in $[0,1]$ due to the bang--bang feature of the problem in the no--common noise setting.

\section{The one--period mean field game of cross--holding} 
\label{sec:oneperiod}

Our starting point in the section is the mean field one--period dynamics \eqref{1periodMF} of the representative equity price variation, which we may write by the tower property as:
\begin{equation}\label{1periodMF1}
\big(1+\widehat\E^{\mu_0}[\pi( \widehat X_0, X_0)]\big)\Delta X
=
b(X_0)
+\sigma(X_0)\varepsilon
+\sigma^0(X_0)\varepsilon^0
+\widehat\E^{\mu_0}\big[\beta(X_0,\widehat X_0)F^\mu(\widehat X_0,\eps^0)\big],
\end{equation}
where $\widehat\E^{\mu_0}\big[\psi(X_0,\widehat X_0,\eps^0)\big]=\int_{\R}\psi(X_0,\widehat x_0,\eps^0)\mu_0(\mathrm{d}\widehat x_0)$,  $\mu$ is the conditional law of the pair $(X_0,X_1)$, conditional on the common noise $\eps^0$, $\widehat\E^{\mu}$ denotes the expectation operation with respect to the random measure $\mu$ on the copy space, and
\be
F^\mu(\widehat X_0,\eps^0)
&:=&
\widehat\E^\mu\left[\Delta\widehat X|\widehat X_0\right].
\ee
Here, $(\mu,\pi)$ define a random environment representing the equity value distribution conditional on the common noise and the cross--holding strategy of the surrounding population, respectively. Notice that $\mu$ impacts the dynamics of $(X_0,X_1)$ only through the conditional mean of value variation $F^\mu\in\L^1(\mu_0\otimes\rho)$. Consequently, the random environment in the one--period model reduces to the pair $(F^\mu,\pi)$. We shall denote by 
$$
X_1^{\mu,\pi,\beta}(X_0):=X_0+\Delta X^{\mu,\pi,\beta}(X_0),
$$ 
where the increment $\Delta X^{\mu,\pi,\beta}(X_0):=\Delta X$ as given by \eqref{1periodMF1}. Throughout this section, we assume for simplicity that 
\begin{equation}
    b,\sigma,\sigma^0\;\mbox{ bounded and }\sigma(X_0),\sigma^0(X_0)\mbox{ non--degenerate, }\P\mbox{--a.s.}
\end{equation}

Our main concern is to find a mean field game solution in the current context where the representative agent seeks to maximize the criterion
\b*
J(X_0,\beta;F^\mu,\pi)
:=
\Uc\left( \Lc \left(X_1^{\mu,\pi,\beta}(X_0) | X_0 \right)\right),
\e*
for some function $\Uc:\Pc_p(\R)\longrightarrow\R$, for some $p>0$. Of course, this requires to restrict the admissible strategies to the collection $\Ac_p(F^\mu,\pi)$ of all cross--holding strategies $\beta$ which induce an $\L^p$--integrable random variable $X_1^{\mu,\pi,\beta}(X_0)$.
\begin{definition}[One period mean field game of cross--holding]\label{def:MFG1}
A random environment $(F^\mu,\pi)$ is an equilibrium solution of the mean field game of cross--holding if $F^\mu\in\L^1(\mu_0\otimes\rho)$, $\pi\in\Ac_p(F^\mu,\pi)$, and $\P$--a.e.
\begin{itemize}
\item[{\rm (i)}] $J(X_0,\pi;F^\mu,\pi)=\max_{\beta\in\Ac_p(F^\mu,\pi)} J(X_0,\beta;F^\mu,\pi)$,
\item [{\rm (ii)}] $\E\big[\Delta X_1^{\mu,\pi,\pi}(X_0)|X_0,\eps^0\big]=F^\mu(X_0,\eps^0)$.
\end{itemize}

\end{definition}

\subsection{No--arbitrage (NA)}\label{NA}

In the context of the current one--period model, we introduce the following notion of no--arbitrage which appears naturally as a necessary condition for the individual optimization of Step (i) of Definition \ref{def:MFG1} to have a solution. As a consequence, it is sufficient to limit our search of equilibrium solutions of the mean field game of cross-holding to those environments which satisfy the following no--arbitrage condition. Denote
   \begin{equation*}
   G_{\mu}^\beta=
       G_{\mu}^\beta(X_0,\varepsilon^0)
       :=
       \widehat \E^{\mu_0}\big[ \beta(X_0,\widehat X_0)F^\mu(\widehat X_0, \varepsilon^0)\big] 
       =
       \int_{\R} \beta(X_0,\widehat x_0)F^\mu(\widehat x_0, \varepsilon^0)\mu_0(\mathrm{d}\widehat x_0).
   \end{equation*}

\begin{definition}[No--arbitrage (NA)]\label{NA_def} 
Let  $(F^\mu,\pi)\in\L^1(\mu_0\!\otimes\!\rho)\times \L^1(\mu_0\!\otimes\!\mu_0)$. We say that  $(F^\mu,\pi)$ satisfies the no--arbitrage condition if for all $\beta \in \Ac_1(F^\mu,\pi)$, we have:  
\begin{equation}G_{\mu}^\beta \geq 0,\, \mu_0\!\otimes\!\rho\mbox{--a.s} \implies  G_{\mu}^\beta=0,\,   \mu_0\!\otimes\!\rho\mbox{--a.s}.\label{condition_NA}
\end{equation}
\end{definition} 
As standard, the last condition can be reformulated as 
 \be \label{condition_K_NA}
K \cap \L^1_+(\mu_0 \otimes \rho)
&=&
\{0\},
 \ee
where $K$ is the collection of all (super--)hedgeable claims
\begin{equation}\label{ensemble_K}
 K
 :=
 \Big\{ \xi\!=\!\xi(X_0,\varepsilon^0) \!\in\! \L^1(\mu_0 \otimes \rho): \xi \leq G_{\mu}^{ \beta},
 ~\mbox{a.s. for some}~\beta \!\in\! \Ac_1(F^{\mu},\pi)
 \Big\}.
 \end{equation}
The following result is proved by adapting the standard methods in the no--arbitrage literature.

\begin{lemma}\label{lemme_fermeture}
Under the no--arbitrage condition \eqref{condition_K_NA}, the set $K$ is closed in $\L^1$.
\end{lemma}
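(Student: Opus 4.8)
The plan is to follow the classical Kreps–Yan / Delbaen–Schachermayer style argument showing that the cone of superhedgeable claims is $\L^1$-closed under a no–arbitrage hypothesis, adapted to the present finite-horizon, one–period setting where the ``hedging instruments'' are the random variables $G_\mu^\beta$ indexed by $\beta\in\Ac_1(F^\mu,\pi)$. First I would record the structural properties of $K$: it is a convex cone in $\L^1(\mu_0\otimes\rho)$ (because $\beta\mapsto G_\mu^\beta$ is linear in $\beta$ and $\Ac_1$ is a convex cone, being the set of $\beta$ making $X_1^{\mu,\pi,\beta}$ integrable), it is solid/decreasing (if $\xi\in K$ and $\xi'\le\xi$ a.s.\ then $\xi'\in K$), and it contains $-\L^1_+(\mu_0\otimes\rho)$. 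Then the problem reduces to showing that if $\xi_n\in K$ with $\xi_n\to\xi$ in $\L^1$, then $\xi\in K$.

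The heart of the argument is to reduce to a normalization on the generators. For each $n$ pick $\beta_n\in\Ac_1(F^\mu,\pi)$ with $\xi_n\le G_\mu^{\beta_n}$. The key step is to pass to the normalized sequence: if $\|G_\mu^{\beta_n}\|_{\L^1}$ stays bounded, one invokes the Komlós / Banach–Saks lemma (a sequence bounded in $\L^1$ admits a sequence of forward convex combinations converging a.s.) to get convex combinations $\widetilde\beta_n$ of the $\beta_n$ with $G_\mu^{\widetilde\beta_n}\to g$ a.s.\ for some $g\in\L^1$; since the corresponding convex combinations of $\xi_n$ still converge to $\xi$ in $\L^1$ (hence a.s.\ along a subsequence) and are dominated by $G_\mu^{\widetilde\beta_n}$, Fatou gives $\xi\le g$. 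To conclude $g=G_\mu^{\widetilde\beta}$ for an admissible $\widetilde\beta$, I would use that $G_\mu^\beta(X_0,\eps^0)=\int_\R \beta(X_0,\widehat x_0)F^\mu(\widehat x_0,\eps^0)\mu_0(\drm\widehat x_0)$ is a linear functional of $\beta(X_0,\cdot)$ against the fixed kernel $F^\mu$, so a.s.\ convergence of $G_\mu^{\widetilde\beta_n}$ together with an extraction/Komlós argument on the $\widetilde\beta_n$ themselves (viewed in $\L^1(\mu_0\otimes\mu_0)$, using admissibility to get the needed integrability bound) produces a limiting admissible strategy $\widetilde\beta$ with $G_\mu^{\widetilde\beta}=g\ge\xi$, whence $\xi\in K$.

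The delicate point — and the place where the no–arbitrage hypothesis \eqref{condition_K_NA} must be used — is the dichotomy on whether $\|G_\mu^{\beta_n}\|_{\L^1}$ is bounded. If it is not, rescale: set $\lambda_n:=1+\|G_\mu^{\beta_n}\|_{\L^1}\to\infty$ and consider $\beta_n/\lambda_n$, still admissible, with $G_\mu^{\beta_n/\lambda_n}$ bounded in $\L^1$ and $\xi_n/\lambda_n\le G_\mu^{\beta_n/\lambda_n}$; since $\xi_n/\lambda_n\to0$ in $\L^1$, the same Komlós argument yields a limit $g_\infty\in\L^1$ with $g_\infty\ge 0$ a.s.\ and $g_\infty=G_\mu^{\beta_\infty}$ for an admissible $\beta_\infty$, \emph{and} $\|g_\infty\|_{\L^1}$ can be arranged to equal $1$ after a further normalization (this requires care: one normalizes by the $\L^1$-norm of the nonnegative parts, exploiting that $G_\mu^{\beta_n}$ is bounded below by $-\|G_\mu^{\beta_n}\|_{\L^1}$ in a suitable sense, a standard but somewhat technical maneuver). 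Such a $g_\infty\in K\cap\L^1_+(\mu_0\otimes\rho)$ with $g_\infty\neq0$ contradicts \eqref{condition_K_NA}. Hence the bounded case always applies and the proof closes.

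I expect the main obstacle to be precisely this normalization/contradiction step: making rigorous the ``$\|G_\mu^{\beta_n}\|_{\L^1}\to\infty$ forces a nonzero nonnegative element of $K$'' dichotomy, because one must control the \emph{negative} parts of $G_\mu^{\beta_n}/\lambda_n$ to guarantee the a.s.\ limit $g_\infty$ is genuinely nonnegative and nonzero rather than identically $0$ — in the classical proofs this is handled by a careful choice of the normalizing constants and occasionally by splitting the probability space according to where the negative parts live. A secondary technical point is ensuring the limiting strategy $\widetilde\beta$ (resp.\ $\beta_\infty$) lies in $\Ac_1(F^\mu,\pi)$, i.e.\ that $X_1^{\mu,\pi,\widetilde\beta}(X_0)\in\L^1$; this should follow from \eqref{1periodMF1} together with boundedness of $b,\sigma,\sigma^0$ and the $\L^1$-control on $G_\mu^{\widetilde\beta_n}$, but it needs to be spelled out. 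Everything else — convexity, solidity, the Fatou passage — is routine.
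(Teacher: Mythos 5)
Your overall architecture (convex cone, reduce to $\xi_n\le G_\mu^{\beta_n}$ with $\xi_n\to\xi$ in $\L^1$, then a dichotomy between a compact case and a degenerate case where NA is invoked) matches the paper's, but both branches of your dichotomy have genuine gaps, and they stem from the same choice: you normalize the \emph{gains} $G_\mu^{\beta_n}$ in $\L^1$, whereas the paper normalizes the \emph{strategies} $\beta_n$ in $\L^\infty$. In your unbounded branch, Koml\'os gives a.s.\ convergence of convex combinations of $G_\mu^{\beta_n}/\lambda_n$, but with no uniform integrability the a.s.\ limit $g_\infty$ can be identically zero even though every term has $\L^1$-norm close to one (mass escapes to infinity), and then there is no contradiction with \eqref{condition_K_NA}. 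You flag this yourself, but ``normalizing by the nonnegative parts'' does not repair it; in the classical one--period proofs this step is closed by subtracting the kernel of the gains map and inducting on the (finite) number of assets, and here the strategy space is infinite--dimensional (a continuum of assets indexed by $\widehat x_0$), so no such induction is available. Your bounded branch has a second gap: to identify the Fatou limit $g$ as $G_\mu^{\widetilde\beta}$ for an admissible $\widetilde\beta$ you want a Koml\'os extraction on the strategies $\widetilde\beta_n$ in $\L^1(\mu_0\otimes\mu_0)$, but boundedness of $\|G_\mu^{\beta_n}\|_{\L^1}$ gives no bound on $\|\beta_n\|_{\L^1(\mu_0\otimes\mu_0)}$, since $\beta\mapsto G_\mu^\beta$ can have a large kernel; the hypothesis your extraction needs is simply not there.

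The paper's proof avoids both problems by splitting on $\liminf_n\|\beta_n\|_{\L^\infty}$. When this is finite, Mazur's lemma applied directly to the $\beta_n$ yields a limit strategy $\beta$ with $G_\mu^{\widehat\beta_n}\to G_\mu^{\beta}$ in $\L^1$ (the estimate $\|G_\mu^{\widehat\beta_n}-G_\mu^{\beta}\|_{\L^1}\le\|\widehat\beta_n-\beta\|_{\L^\infty}\|F^\mu\|_{\L^1}$ does all the work), which settles the compact case including admissibility of the limit. When $\|\beta_n\|_{\L^\infty}\to\infty$, it sets $\gamma_n:=\tfrac12\beta_n/\|\beta_n\|_{\L^\infty}$, extracts a limit $\gamma$ with $\|\gamma\|_{\L^\infty}\le\tfrac12$ and $G_\mu^{\gamma}\ge0$ a.s., hence $G_\mu^{\gamma}=0$ by NA; the contradiction is then obtained \emph{not} by exhibiting a nonzero arbitrage, but from the fact that the homogeneous Fredholm relation $\int_\R\gamma(x,\widehat x)F^\mu(\widehat x,\eps^0)\mu_0(\mathrm{d}\widehat x)=0$, with a kernel of operator norm at most $\tfrac12$, forces $F^\mu=0$ --- a case disposed of at the very start, where $K$ reduces to the nonpositive orthant and is trivially closed. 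This use of the bilinear structure of $G_\mu^\beta$ and the uniqueness theory for Fredholm equations with contractive kernel is the idea missing from your plan; without it, or some substitute for the finite--dimensional induction, the degenerate branch cannot be closed.
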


The proof is reported in Appendix \ref{preuves_section2}. The following result provides a characterization of the NA condition by means of a collinearity relationship between the drift $b$ and the volatility of the common noise $\sigma^0$. 
This restriction is similar to the drift--volatility restriction in the HJM term structure  model. The latter restriction is a consequence of the fact that self--financing strategies in the HJM model are built on the financial market containing an infinite number of assets, namely a zero--coupon bond for all maturity $T>0$. Notice that the analogy with the HJM model is natural as our model involves trading in an infinite number of assets defined as the equity value of each agent of the surrounding population.

\begin{theorem}\label{NA_Theorem}
For a random environment $(F^\mu,\pi)\in\L^1(\mu_0\!\otimes\!\rho)\times \L^1(\mu_0\!\otimes\!\mu_0)$, the following statements are equivalent: 
\begin{enumerate}[label=\upshape(\roman*),ref= (\roman*)]
    \item\label{th:first} $(F^\mu,\pi)$ satisfies the {\rm NA} condition;
    \item\label{th:second} There exists an $\varepsilon^0$--measurable r.v. $Z$ such that 
    \begin{equation}
Z>0, \, \,  \rho\mbox{--a.s}, \, \, \E^{\rho}[Z]=1 \mbox{ and } \, \E^{\rho}\big[Z  F^\mu(x_0,\varepsilon^0) \big]=0, \, \, \mu_0(\mathrm{d}x_0)\mbox{--a.s}. 
\label{forme_Z}
\end{equation}
    \end{enumerate}
If in addition $(\mu,\pi)$ is an equilibrium solution of the MFG, then with the r.v. $Z$ of {\rm (ii)},   the last conditions are equivalent to 
\begin{equation}
\label{condition_coli}
b(x_0)+ \E^{\rho}[Z \varepsilon^0]\sigma^0(x_0)=0,\, \,  \mu_0(\mathrm{d}x_0)\mbox{--a.s}.
\end{equation}

\end{theorem}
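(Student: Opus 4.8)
The plan is to prove the chain of equivalences in three moves. First, the equivalence of \ref{th:first} and \ref{th:second}: this is the classical functional-analytic duality underlying the Fundamental Theorem of Asset Pricing in a one-period model with finitely-many (here, a measured continuum of) assets. The no-arbitrage condition \eqref{condition_K_NA} says $K\cap\L^1_+(\mu_0\otimes\rho)=\{0\}$, and by Lemma~\ref{lemme_fermeture} the convex cone $K$ is closed in $\L^1(\mu_0\otimes\rho)$. I would apply the Kreps--Yan separation theorem (or a Hahn--Banach argument in the $\L^1$--$\L^\infty$ duality): there exists a strictly positive $\Lambda\in\L^\infty(\mu_0\otimes\rho)$ with $\langle\Lambda,\xi\rangle\le 0$ for all $\xi\in K$. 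Since for every fixed $\beta$ the element $G_\mu^\beta$ and its negative $G_\mu^{-\beta}$ both lie in $K$ (note $\Ac_1$ is a vector space containing $\pm\beta$), we get $\langle\Lambda,G_\mu^\beta\rangle=0$ for all admissible $\beta$. Writing $G_\mu^\beta(x_0,\eps^0)=\int\beta(x_0,\widehat x_0)F^\mu(\widehat x_0,\eps^0)\mu_0(\drm\widehat x_0)$ and using Fubini, the identity $\int\!\!\int \Lambda(x_0,\eps^0)\beta(x_0,\widehat x_0)F^\mu(\widehat x_0,\eps^0)\,\mu_0(\drm\widehat x_0)\,\mu_0(\drm x_0)\,\rho(\drm\eps^0)=0$ for all $\beta$ in a sufficiently rich class forces, for $\mu_0(\drm\widehat x_0)$--a.e.\ $\widehat x_0$, the vanishing of $\int \big(\int\Lambda(x_0,\eps^0)\mu_0(\drm x_0)\big)F^\mu(\widehat x_0,\eps^0)\rho(\drm\eps^0)=0$. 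Setting $Z:=\big(\int\Lambda(\cdot,\eps^0)\mu_0(\drm x_0)\big)/\E^\rho[\cdots]$, normalized so that $\E^\rho[Z]=1$, yields a strictly positive $\eps^0$--measurable density with $\E^\rho[Z F^\mu(x_0,\eps^0)]=0$ $\mu_0$--a.s., which is \eqref{forme_Z}. The converse \ref{th:second}$\Rightarrow$\ref{th:first} is immediate: if such a $Z$ exists and $G_\mu^\beta\ge 0$ a.s., then $\E^\rho[Z\,G_\mu^\beta]=\int\beta(x_0,\widehat x_0)\E^\rho[Z F^\mu(\widehat x_0,\eps^0)]\mu_0(\drm\widehat x_0)=0$ by Fubini, and strict positivity of $Z$ forces $G_\mu^\beta=0$ a.s.

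Second, under the equilibrium hypothesis, I would translate \eqref{forme_Z} into \eqref{condition_coli}. The point is to compute $F^\mu(x_0,\eps^0)=\E[\Delta X\mid X_0=x_0,\eps^0]$ explicitly using the equilibrium fixed-point relation. By Definition~\ref{def:MFG1}(ii), $F^\mu(X_0,\eps^0)=\E[\Delta X^{\mu,\pi,\pi}(X_0)\mid X_0,\eps^0]$, and from \eqref{1periodMF1} with $\beta=\pi$,
\[
\big(1+\widehat\E^{\mu_0}[\pi(\widehat X_0,X_0)]\big)\Delta X
= b(X_0)+\sigma(X_0)\eps+\sigma^0(X_0)\eps^0+\widehat\E^{\mu_0}\big[\pi(X_0,\widehat X_0)F^\mu(\widehat X_0,\eps^0)\big].
\]
Taking $\E[\,\cdot\mid X_0,\eps^0]$ and using $\E[\eps\mid X_0,\eps^0]=0$, the term $\widehat\E^{\mu_0}[\pi(X_0,\widehat X_0)F^\mu(\widehat X_0,\eps^0)]=G_\mu^\pi(X_0,\eps^0)$ satisfies $\E^\rho[Z G_\mu^\pi]=0$ by the argument above; combined with \eqref{forme_Z} this pins down $F^\mu$ up to the normalizing factor. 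Concretely, multiplying the conditional-mean identity by $Z$ and integrating against $\rho$ kills the left-hand side (since $\E^\rho[Z F^\mu(x_0,\eps^0)]=0$ and the prefactor is $\eps^0$--measurable — here one must check the prefactor $1+\widehat\E^{\mu_0}[\pi(\widehat X_0,x_0)]$ is indeed $\eps^0$--measurable and does not vanish) and kills the $G_\mu^\pi$ term, leaving $\E^\rho[Z(b(x_0)+\sigma^0(x_0)\eps^0)]=0$, i.e.\ $b(x_0)+\E^\rho[Z\eps^0]\sigma^0(x_0)=0$, which is exactly \eqref{condition_coli}. Conversely, \eqref{condition_coli} together with the equilibrium relation lets one recover \eqref{forme_Z} by reversing these steps.

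The main obstacle I anticipate is the second move: justifying that the conditional expectation $F^\mu$ can be computed in closed form from the equilibrium fixed point, which requires controlling the prefactor $1+\widehat\E^{\mu_0}[\pi(\widehat X_0,X_0)]$ — its $\eps^0$--measurability, its non-degeneracy (it must be bounded away from zero so that one can divide), and the $\L^1$--integrability needed to apply Fubini when passing expectations through the $\widehat\E^{\mu_0}$ operator. These are presumably guaranteed by the standing integrability assumptions on $\pi\in\L^1(\mu_0\otimes\mu_0)$ and the admissibility constraint $\pi\in\Ac_p(F^\mu,\pi)$, but the bookkeeping of which quantities are $\eps^0$--measurable versus genuinely random is delicate. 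A secondary technical point is the richness of the class of test strategies $\beta$ in the first move: one needs that products $\mathbf{1}_A(x_0)\mathbf{1}_B(\widehat x_0)$ (or enough separable functions) lie in $\Ac_1(F^\mu,\pi)$ to conclude the a.s.\ vanishing from $\langle\Lambda,G_\mu^\beta\rangle=0$; this should follow from boundedness of such $\beta$ and the integrability of $F^\mu$, but must be verified.
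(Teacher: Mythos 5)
Your proposal is correct and follows essentially the same route as the paper: Yan's (Kreps--Yan) separation theorem applied to the closed cone $K$ of Lemma \ref{lemme_fermeture}, then Fubini and averaging the separating functional over $x_0$ to produce the $\eps^0$--measurable density $Z$, and finally the conditional--expectation identity \eqref{F_eta} multiplied by $Z$ and integrated against $\rho$ to pass between \eqref{forme_Z} and \eqref{condition_coli}. One small correction to your second move: the prefactor $1+\widehat\E^{\mu_0}[\pi(\widehat X_0,x_0)]$ is a deterministic function of $x_0$ alone, not $\eps^0$--measurable; what makes the left-hand side vanish after applying $\E^\rho[Z\,\cdot\,]$ is precisely that it does not depend on $\eps^0$ and therefore factors out of the $\rho$--integral.
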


\proof We shall denote throughout $ m^{\pi}(X_0)^{-1}:=1+\widehat\E^{\mu_0}[\pi(\widehat X_0,X_0)]$.

\smallskip 
\noindent (i)$\Longrightarrow$(ii): By Lemma \ref{lemme_fermeture}, $K$ is a closed convex cone containing $\L^1_-(\mu_0 \otimes \rho)$ satisfying \eqref{condition_K_NA}. Then, it follows from Yan's Theorem \cite{yan} that there exists an $(\varepsilon^0,X_0)$--measurable random variable $\tilde Z \in \L^\infty_+(\mu_0 \otimes \rho)$ satisfying $\E^{\mu_0 \otimes \rho}\big[ \tilde Z(X_0, \varepsilon^0) G_{ \mu}^{ \beta}(X_0,\varepsilon^0)\big] \le 0$ for all $\beta\in\Ac_1(F^\mu,\pi)$, which we may rewrite by Fubini's Theorem as:
$$ 
\E^{\mu_0 \otimes \rho}\big[ \tilde Z(X_0, \varepsilon^0) G_{ \mu}^{ \beta}(X_0,\varepsilon^0)\big]=\E^{\mu_0^{\otimes 2}}\Big[
  \beta(X_0,\widehat X_0)\E^\rho\big[F^{\mu}(\widehat X_0, \varepsilon^0) \tilde Z(X_0,\varepsilon^0) \big]  \Big] \leq 0. 
  $$
 By the arbitrariness of~$ \beta \in  \Ac_1(F^{\mu},\pi)$, this implies~$\E^\rho\big[Z(\varepsilon^0) F^{\mu}(x_0,\varepsilon^0)\big]=0$, $\mu_0(\mathrm{d}x_0)\mbox{--a.s}$, where $ Z(\varepsilon^0):=\E^{\mu_0}[\tilde Z(X_0,\varepsilon^0) ]$ satisfies \eqref{forme_Z}. 

\smallskip 
\noindent (ii)$\Longrightarrow$(i): Under~\ref{th:second}, we obtain for all $\beta \in \Ac_1(F^\mu,\pi)$ that 
$$
\E^\rho\big[ Z(\varepsilon^0) G_{ \mu}^{ \beta}(x_0,\varepsilon^0)\big]=\widehat \E^{\mu_0}\Big[ \beta(x_0,\widehat X_0)\E^\rho\big[Z(\varepsilon^0)  F^{\mu}(\widehat X_0,\varepsilon^0)\big] \Big]=0, \, \, \mu_0(\mathrm{d}x_0)\mbox{--a.s}.
$$
by the Fubini Theorem and the fact that $Z$ is $\varepsilon^0$--measurable. As $Z>0,$ a.s. we deduce that whenever $G_{ \mu}^{ \beta}\geq 0$ a.s, then $G_{\mu}^{ \beta}=0$, a.s. Therefore,  NA holds~\ref{th:first}.
 
\smallskip 
We finally show that (ii) is equivalent to \eqref{condition_coli} for an equilibrium solution $(\mu,\pi)$. As $\varepsilon$ is centered and independent of $(\eps^0,\eta)$, we obtain by taking conditional expectation on $(X_0, \varepsilon^0)$ in  \eqref{1periodMF1}: 
\begin{equation} \frac{F^\mu(X_0,\varepsilon^0)}{m^{\pi}(X_0)}= b(X_0)+\sigma^0(X_0)\varepsilon^0+\widehat \E^{\mu_0}\big[ \pi(X_0,\widehat X_0) F^{\mu}(\widehat X_0, \varepsilon^0)\big], \, \mu_0 \otimes \rho\mbox{--a.s}. \label{F_eta}\end{equation}
which in turn implies by Fubini's Theorem that $\mu_0\mbox{--a.s}$
$$ 
\frac{\E^\rho\big[ Z  F^{\mu}(X_0, \varepsilon^0)\big]}{m^{\pi}(X_0)}
= 
\E^\rho[Z]b(X_0)+ \E^\rho\big[Z \varepsilon^0 \big]\sigma^0(X_0)  +\widehat \E^{\mu_0}\big[ \pi(X_0,\widehat X_0)\E^\rho[ Z F^\mu(X_0,\varepsilon^0)]\big]. 
$$
This provides the required equivalence.
\ep

\subsection{The mean--variance criterion}
\label{one_mean_field_period}

In this section, we specialize the discussion to the case where the performance of the representative agent is measured by the mean--variance criterion:
\be
\label{mean_variance} 
\sup_{\beta\in\Ac_2(F^\mu,\pi)} 
\mathbb{MV}_q\big[X_1^{\mu,\pi,\beta}|X_0\big], 
&\mbox{where}&
\mathbb{MV}_q[\cdot |X_0] 
:=
\E[\cdot |X_0]-\frac{1}{2q}\V[\cdot |X_0], 
\ee 
for some given parameter $q>0$. Although this criterion fails to be nondecreasing, we shall focus on this standard mean--variance problem for computational tractability. Moreover, as standard in the portfolio optimization literature, the mean--variance criterion is often believed to capture the main features of a risk--averse expected utility performance criterion. 

We emphasize that due to the failure of the monotonicity of the mean--variance performance criterion, it is not anymore clear that the no--arbitrage condition is a necessary condition of equilibrium. For this reason, we search for an equilibrium solution of the MFG problem of cross--holding without imposing the NA constraint on the coefficient. Remarkably, our results  below say that such an equilibrium fails to exist if the coefficients $b$ and $\sigma^0$ are not collinear. 

Our starting point is the following equation for $F^\mu$ which follows from taking expectations on both sides of \eqref{1periodMF1}:
\begin{equation}\label{eq_Nash1}
\big(1\!+\!\widehat\E^{\mu_0}[\pi(\widehat X_0,\!X_0)]\big)
F^\mu(X_0,\!\varepsilon^0)
= 
b(X_0)
\!+\!
\sigma^0(X_0)\varepsilon^0+\widehat \E^{\mu_0}\big[ \pi(X_0,\!\widehat X_0) F^{\mu}(\widehat X_0, \varepsilon^0)\big]. 
\end{equation}
This equation expresses  the map $F^\mu$ as a solution of a Fredholm integral equation of the second type whose Kernel depends on the equilibrium strategy $\pi$ and the corresponding environment distribution $\mu$. Our main result provides a complete characterization of equilibrium solutions of our MFG generated by cross--holding strategies under which uniqueness holds for the last Fredholm integral equation. We notice that, under this uniqueness condition, it follows that
\be\label{FG}
F^\mu(X_0, \varepsilon^0)
&:=&
\E^\rho[F^\mu(X_0, \varepsilon^0)]
+\varepsilon^0 F_1^\mu(X_0),
\ee
where $\E^\rho[F^\mu(X_0, \varepsilon^0)]$ satisfies a similar Fredholm equation, by taking expectations conditional to $X_0$, and $F_1$ satisfies
$$
\big(1+\widehat\E^{\mu_0}[\pi(\widehat X_0,X_0)]\big)
F_1^\mu(X_0)
=
\sigma^0(X_0)
+\widehat \E^{\mu_0}\big[ \pi(X_0,\widehat X_0) F_1^\mu(\widehat X_0)\big], 
\mu_0\!\otimes\!\rho\mbox{--a.s.}
$$

\begin{theorem}\label{Theorem_eq_Nash1}
There exists a mean field equilibrium $(F^\mu, \pi) \in \L^2(\mu_0 \otimes \rho) \times \L^2(\mu_0^{\otimes 2}) $ with unique solution to the Fredholm equation \eqref{eq_Nash1} if and only if 
    \b*
    \E[b(X_0)] >0
    &\mbox{and}&
    \sigma^0=\lambda b
    ~~\mbox{with}~~\lambda=\pm\sqrt{\frac{q}{\E[b(X_0)]}},
    \e*
and the strategy $\pi \in \L^2(\mu_0\!\otimes\!\mu_0)$ induces uniqueness for the Fredholm equation \eqref{eq_Nash1} and satisfies 
\be\label{drift_b}
0 
\ne 
\widehat\E^{\mu_0}\big[\pi (X_0, \widehat X_0)\big]
+\frac{b(X_0)}{\E[b(X_0)]}
&=& 
1+\widehat\E^{\mu_0}\big[\pi (\widehat X_0,X_0)\big], \, \, \mu_0\mbox{--a.s.}
\ee
In this case, $F^\mu$ is explicitly given by 
\begin{equation}   
F^\mu(X_0, \varepsilon^0)
=
\E[b(X_0)] ( 1+\lambda \varepsilon^0), \, \, \mu_0 \otimes \rho\mbox{--a.s}.\label{forme_F_var}
\end{equation}
\end{theorem}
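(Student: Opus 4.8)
The plan is to analyze the mean-variance optimization in Step (i) of Definition \ref{def:MFG1} explicitly, then impose the fixed-point condition in Step (ii) together with the consistency equation \eqref{eq_Nash1}. First I would write out the controlled terminal value from \eqref{1periodMF1}: since $\varepsilon,\varepsilon^0$ are independent and centered with unit variance, conditional on $X_0$ we have
$$
\E\big[X_1^{\mu,\pi,\beta}\mid X_0\big]
=
X_0 + m^\pi(X_0)\big(b(X_0) + G_\mu^\beta(X_0,\varepsilon^0)\big)\text{-type terms averaged over }\varepsilon^0,
$$
and the conditional variance splits into an idiosyncratic piece $m^\pi(X_0)^2\sigma(X_0)^2$ — which does not depend on $\beta$ — and a common-noise piece coming from $\sigma^0(X_0)\varepsilon^0 + G_\mu^\beta(X_0,\varepsilon^0)$. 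So the agent's objective, up to $\beta$-independent constants, is to maximize over the scalar random quantity $A^\beta(X_0) := m^\pi(X_0)\,G_\mu^\beta(X_0,\varepsilon^0)$ (an $\varepsilon^0$-measurable r.v. parametrized by $X_0$) the functional $\E^\rho[A^\beta] - \frac{1}{2q}\V^\rho[\sigma^0(X_0)\varepsilon^0 m^\pi(X_0) + A^\beta]$. This is a pointwise-in-$X_0$ quadratic maximization over the attainable set of $\varepsilon^0$-measurable claims.

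Next I would identify the attainable set. Here is where the NA characterization of Theorem \ref{NA_Theorem} enters: if $\sigma^0$ and $b$ are \emph{not} collinear, I claim the set of achievable $G_\mu^\beta$ (as $\beta$ ranges over $\Ac_2$) is large enough — in particular not contained in the orthogonal complement of any single positive $\varepsilon^0$-density — that the quadratic functional is unbounded above, so no optimizer exists and hence no equilibrium; this is exactly the "remarkable" non-existence claim flagged in the text before the theorem. Concretely, failure of collinearity means one can find directions in which $\E^\rho[A^\beta]$ grows linearly while $\V^\rho[\cdot]$ can be kept controlled, contradicting the existence of a maximizer in Step (i). This forces $\sigma^0 = \lambda b$ for some scalar $\lambda$ (a.s. in $X_0$, after using that $\sigma^0$ is non-degenerate to rule out $b\equiv 0$ on a non-null set and to pin down $\lambda$ as a genuine constant), and forces $\E[b(X_0)]\neq 0$; the sign and magnitude $\lambda = \pm\sqrt{q/\E[b(X_0)]}$, and the positivity $\E[b(X_0)]>0$, should drop out of the first-order condition of the quadratic once the fixed-point equation is substituted.

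Then, assuming collinearity, I would solve the quadratic explicitly. The first-order (Euler) condition for the mean-variance problem gives the optimal attainable claim in terms of a pricing density $Z$, and matching with \eqref{forme_Z} in Theorem \ref{NA_Theorem}\ref{th:second} identifies $Z$; feeding this into the collinearity relation \eqref{condition_coli} gives $\E^\rho[Z\varepsilon^0] = -1/\lambda$, which combined with $\E^\rho[Z]=1$ and $\sigma^0=\lambda b$ lets me solve for the constant $\lambda$. Simultaneously, Step (ii) together with \eqref{eq_Nash1} forces $F^\mu$ to have the affine-in-$\varepsilon^0$ form \eqref{FG}; plugging the optimal $\pi$ into the Fredholm equation and taking $\E[\cdot\mid X_0]$ yields the scalar relation for $\widehat\E^{\mu_0}[\pi(\widehat X_0,X_0)]$ displayed in \eqref{drift_b}, while the $\varepsilon^0$-coefficient equation for $F_1^\mu$ together with $\sigma^0=\lambda b$ shows $F_1^\mu = \lambda\,\E^\rho[F^\mu]$ is consistent. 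Uniqueness for the Fredholm equation is imposed as a hypothesis on $\pi$ (it is what makes $F^\mu$ well-defined given $\pi,\mu$), so under it the $X_0$-marginal $\E^\rho[F^\mu(X_0,\varepsilon^0)]$ is the unique solution of its own Fredholm equation, and a short computation using \eqref{drift_b} identifies it as the constant $\E[b(X_0)]$, giving \eqref{forme_F_var}. For the converse, I would simply verify that with $\sigma^0=\lambda b$, $\lambda^2 = q/\E[b(X_0)]$, any $\pi$ satisfying \eqref{drift_b} and inducing Fredholm uniqueness, and $F^\mu$ given by \eqref{forme_F_var}, both the optimality in Step (i) (the quadratic's first-order condition holds) and the consistency in Step (ii) are satisfied — this direction is a direct substitution.

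The main obstacle I anticipate is the non-existence direction: rigorously showing that when $b$ and $\sigma^0$ are not collinear the mean-variance functional is genuinely unbounded above over $\Ac_2(F^\mu,\pi)$. This requires a careful description of which $\varepsilon^0$-measurable claims $G_\mu^\beta$ are attainable — i.e. the range of the linear map $\beta\mapsto \int_\R \beta(X_0,\widehat x_0)F^\mu(\widehat x_0,\varepsilon^0)\mu_0(\drm\widehat x_0)$ subject to the $\L^2$-integrability constraint defining $\Ac_2$ — and then exhibiting an explicit unbounded sequence $\beta_n$ exploiting a direction in $\L^2(\rho)$ orthogonal to $\varepsilon^0$ but along which $\E^\rho[\cdot]$ is unbounded, which is possible precisely because non-collinearity of $b,\sigma^0$ prevents $F^\mu$ from lying in the one-dimensional span needed to kill the growth. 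The interplay between the pointwise-in-$X_0$ structure and the global $\L^2$ constraint is the delicate point, and I would handle it by first treating the idealized unconstrained problem pointwise and then checking the unbounded sequence can be realized within $\Ac_2$.
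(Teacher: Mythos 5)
Your reduction of Step (i) to a concave quadratic in the two linear functionals $\beta\mapsto\int\beta(X_0,\hat x)F^\mu_0(\hat x)\mu_0(\drm\hat x)$ and $\beta\mapsto\int\beta(X_0,\hat x)F^\mu_1(\hat x)\mu_0(\drm\hat x)$ is the right starting point and matches the paper's Step 1. However, the chain of deductions that actually produces the theorem's conclusions is missing, and the substitute you propose does not close. The paper's argument is: the G\^ateaux first-order condition at $\beta=\pi$, rewritten with the Fredholm equation for $F_1^\mu$, is exactly the \emph{symmetric} identity $qF^\mu_0(\widehat X_0)=F_1^\mu(X_0)F_1^\mu(\widehat X_0)$, $\mu_0\otimes\mu_0$--a.s.; symmetry forces $F_0^\mu\equiv c$ constant, the Fredholm equation for $F_0^\mu$ identifies $c=\E[b(X_0)]$ (this is where \eqref{drift_b} comes from), and then $\E^{\mu_0}[F_1^\mu]^2=\E^{\mu_0}[(F_1^\mu)^2]=q\E[b(X_0)]$ forces $\V\mathrm{ar}^{\mu_0}[F_1^\mu]=0$, i.e.\ $F_1^\mu=\pm\sqrt{q\E[b(X_0)]}$, from which the collinearity $\sigma^0=\lambda b$ with $\lambda=\pm\sqrt{q/\E[b(X_0)]}$ drops out of the $F_1$--Fredholm equation. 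None of these steps appears in your proposal: you assert that $F_0^\mu=\E[b(X_0)]$ follows ``using \eqref{drift_b}'', which is circular (that display is a consequence of $F_0^\mu$ being the constant $\E[b(X_0)]$, not a tool to prove it), and you never derive the magnitude of $\lambda$ --- your relation $\E^\rho[Z\varepsilon^0]=-1/\lambda$ from \eqref{condition_coli} fixes nothing about $|\lambda|$, since the risk-aversion parameter $q$ can only enter through the variance term of the first-order condition, i.e.\ through the identity $q\E[b(X_0)]=F_1^\mu(X_0)F_1^\mu(\widehat X_0)$ that you do not establish.

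The second gap is the non-existence direction, which you yourself flag as the main obstacle. Routing it through Theorem \ref{NA_Theorem} is problematic: the paper explicitly warns that, because the mean--variance criterion is not monotone, NA is \emph{not} known a priori to be necessary for equilibrium, so you cannot invoke that theorem to rule out non-collinear $(b,\sigma^0)$. The unboundedness you want is governed by whether $F_0^\mu$ and $F_1^\mu$ are proportional as functions on $\mathrm{supp}(\mu_0)$ (the objective is linear in $L_0(\beta)$ and strictly concave only in $L_1(\beta)$, so a maximizer exists iff the stationarity condition $qF_0^\mu\propto F_1^\mu$ is solvable); translating that into a statement about $b$ and $\sigma^0$ requires passing through the two Fredholm equations \eqref{F0}--\eqref{Fredholm_G}, which your sketch never does. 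In the paper this whole issue is bypassed: concavity makes the first-order condition equivalent to optimality, so ``an equilibrium exists'' is \emph{equivalent} to the solvability of the system \eqref{eq_Nash}, and the collinearity restriction is then read off algebraically rather than via an arbitrage construction. You would also need to justify, as the paper does using the non-degeneracy of $\sigma$, that $1+\widehat\E^{\mu_0}[\pi(\widehat X_0,X_0)]\neq 0$ $\mu_0$--a.s., which your proposal omits.
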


The existence of a solution of \eqref{drift_b} satisfying uniqueness for the Fredholm equation \eqref{eq_Nash1} will be discussed in Examples \ref{exp:psi(hatx)} and \ref{exp:phi(x)psi(hatx)} below.
We emphasize again that the last equilibrium characterization imposes a collinearity condition between the drift coefficient of the idiosyncratic risk and its common noise volatility. In particular the mean--variance equilibrium solution of the MFG satisfies the NA condition. 

The equilibrium dynamics derived in the last result are defined by
\begin{equation}
X^{\mu,\pi,\pi}_1
=
X_0
+\E[ b(X_0)](1+\lambda \varepsilon^0 )
+ \frac{\sigma(X_0)\varepsilon}
          {1+\widehat\E^{\mu_0}\big[\pi (\widehat X_0,X_0)\big]}.
\label{SDE_equilibre}
\end{equation}
In particular, the common noise volatility under equilibrium is dilated by $\sqrt{q}$. Moreover, when the mean--variance criterion has large penalty on the variance (i.e. small $q$), the equilibrium common noise is reduced by the factor $\sqrt{q}$. 

We also notice that equation \eqref{drift_b} can be written as  
 \begin{equation*} 
 Y(X_0)
 :=
 \widehat \E^{\mu_0}\left[\pi(X_0, \widehat X_0)\right]-\widehat \E^{\mu_0}\left[\pi( \widehat X_0, X_0)\right]
 =
 1-\frac{b(X_0)}{\E[ b(X_0)]}, \, \, \mu_0\mbox{--a.s,}
 \end{equation*}
Here $Y$ indicates the representative agent's net detention level. As $\E[ b(X_0)]>0$ at equilibrium,  the last expression shows that $Y$ satisfies a mean reversion to the origin driven by the position of drift $b(X_0)$ to the average population drift $\E[ b(X_0)]$.  

\medskip 
For arbitrary strategies $\pi \in \L^2(\mu^{\otimes 2}_0)$, the uniqueness issue of the Fredholm equation  \eqref{eq_Nash1} is not granted, in general. Our restriction to the equilibria generating uniqueness for the Fredholm integral equation imposes more constraints on the set of admissible strategies.

\medskip 
We end this section by the following simplest example of equilibrium strategies satisfying \eqref{drift_b}. 

\begin{example}[cross--holding depending on the second argument only]\label{exp:psi(hatx)} By direct substitution in \eqref{drift_b}, we may find equilibrium strategies $\pi(x,\hat x)$ depending on the $\hat x-$argument only, defined up to a constant $c$ by:
\begin{equation}
\pi(x,\hat x)=\frac{b(\hat x)}{\E[ b(X_0)]}-c 
\mbox{ such that } 
(c-1)\E[b(X_0)] \notin b\big(\mbox{supp}(\mu_0)\big). 
\label{solution_part1}
\end{equation}
Under such equilibrium strategies, the representative agent holds more shares from those competitors with larger expected return. 

We next examine the uniqueness for Freholm equation \eqref{eq_Nash1} induced with such strategies. As $\pi$ depends only on the second argument, equation \eqref{eq_Nash1} becomes 
\b*
F^\mu(X_0,\varepsilon^0)
= 
\frac{\varphi(X_0,\eps^0)+\alpha(\eps^0)}{1\!+ \pi(X_0)},
&\mbox{where}&
\varphi(X_0,\eps^0):=b(X_0)
\!+\!
\sigma^0(X_0)\varepsilon^0
,~\mbox{a.s.} 
\e*
and $\alpha(\eps^0):=\widehat \E^{\mu_0}\big[ \pi(\widehat X_0) F^{\mu}(\widehat X_0, \varepsilon^0)\big]$. Thus, uniqueness for $F^\mu$ reduces to the uniqueness of the map $\alpha(\eps^0)$. Substituting the expression of $F^\mu$, we see that
\b*
\alpha(\eps^0)
=
\frac{\widehat\E\big[\varphi(\widehat X_0,\eps^0)\frac{b(\widehat X_0)-c\E[b(X_0)]}{b(\widehat X_0)-(c-1)\E[b(X_0)]}\big]}
       {\widehat\E\big[\frac{\E[b(X_0)]}{b(\widehat X_0)-(c-1)\E[b(X_0)]}\big]},
\e*
provided that the constant $c$ in \eqref{solution_part1} is chosen so that the denominator of the last expression does not vanish. In particular, notice that $\alpha(\eps^0)$ is affine in $\eps^0$ in agreement with \eqref{FG}.
\end{example}

\begin{example}[Separable form]\label{exp:phi(x)psi(hatx)}  We may also search for solutions of \eqref{drift_b} with separable form $\pi(x,\hat x)=\psi(x)\phi(\hat x)$. Direct substitution provides:
\begin{align}
\pi(x,\hat x)\!=\!\psi(x)\big( c\psi(\hat x)+\frac{b(\hat x)}{\E[ b(X_0) ]}-1 \big) 
\mbox{ with }
& 
\E[ \psi(X_0) ]\!=\!1 \mbox{ and } 
\nonumber\\ &
c\psi(X_0)\!\ne\! \frac{b(X_0)}{\E[ b(X_0) ]},\, \mbox{a.s.}  
\label{exp:phipsi}
\end{align}
We next examine the uniqueness for the Fredholm equation \eqref{eq_Nash1}. As $1+\widehat\E[\pi(\widehat X_0,X_0)=c\psi(X_0)+\frac{b(X_0)}{\E[b(X_0)]}$, it follows that \eqref{eq_Nash1} reduces to
$$
F^\mu(X_0,\eps^0)
=
\frac{\varphi(X_0,\eps^0)+\alpha(\eps^0)\psi(X_0)}
       {c\psi(X_0)+\frac{b(X_0)}{\E[b(X_0)]}}
~\mbox{with}~
\varphi(X_0,\eps^0):=b(X_0)
\!+\!
\sigma^0(X_0)\varepsilon^0
,~\mbox{a.s.}
$$
and $\alpha(\eps^0):=\widehat\E^{\mu_0}\big[ (c\psi(X_0)+\frac{b(X_0)}{\E[b(X_0)]}-1)F^{\mu}(\widehat X_0, \varepsilon^0)\big]$. As in the previous example, the uniqueness for the Fredholm equation is reduced to the uniqueness of the map $\alpha(\eps^0)$, and we obtain by direct substitution that
$$
\alpha(\eps^0)
=
\frac{\widehat\E^{\mu_0}\left[\varphi(X_0,\eps^0)(1-\frac{1}{c\psi(X_0)+\frac{b(X_0)}{\E[b(X_0)]}})\right]}
       {\widehat\E^{\mu_0}\left[\frac{\psi(X_0)}{c\psi(X_0)+\frac{b(X_0)}{\E[b(X_0)]}} \right]},
$$
provided that the constant $c$ in \eqref{exp:phipsi} is chosen so that the denominator in the last expression does not vanish. We notice again  that $\alpha(\eps^0)$ is affine in $\eps^0$ in agreement with \eqref{FG}.
\end{example}

\section{Continuous time MFG of cross--holding} 
\label{sec:set-resutls}

Let $(\Omega,\F,\P)$ be a probability space supporting an $\R^2$--valued Brownian motion $(W,W^0)$. We denote by $\F^0:=\left( \Fc^0_t\right)_{t \in [0,T]}$ the $\P$--completion of the canonical filtration generated by $W^0$ .

\medskip
For some Polish space $E$, we denote by $\Pc_2(E)$ the collection of all probability measures, with finite second moment. Throughout this paper, $E$ will be either $\R^d$, for some integer $d$, or the set $\Cc$ of all continuous maps from $[0,T]$ to $\R$. The canonical process on $\Cc$ is $\widehat{X}:\Cc\longrightarrow\R$ defined by $\widehat{X}_t(\widehat\omega):=\widehat\omega(t)$ for all $t\in[0,T]$ and $\widehat\omega\in\Cc$.

\medskip
For fixed $\mu_0 \in \Pc_2(\R)$, we denote  
\begin{itemize}
\item by $\Sc_2$ the collection of all $\F$--adapted It\^o processes $S$ with initial law $\Lc(S_0)=\mu_0$, and satisfying 
$$
\mathrm{d}S_t
=
B_t(S_t) \drm t
+\Sigma_t(S_t) \drm W_t
+\Sigma^0_t(S_t) \drm W^0_t,
 \;\P\mbox{--a.s.} 
$$
with $\Lc\left(S_{t \wedge \cdot}| \Fc^0_T \right)=\Lc\left(S_{t \wedge \cdot}| \Fc^0_t\right)\;\drm t \otimes\drm\P-$a.e. for some coefficients 
\begin{equation}\label{coef-pm}
B,\Sigma,\Sigma^0:
[0,T] \!\x\! \Om \!\x\! \R\longrightarrow\R,
~\{\Fc_t^0\otimes \Bc(\R)\}_{t \in [0,T]}-\mbox{prog. measurable,}
\end{equation}
and satisfying the integrability condition:
$$
\E \left[ \int_0^T |B_t(S_t)|^2 + |\Sigma_t(S_t)|^2 + |\Sigma^0_t(S_t)|^2 \;\drm t \right]< \infty,$$
in particular, we have $\E \big[ \sup_{t \in [0,T]} |S_t|^2 \big] < \infty$;
\item and by $\Pc^{\F^0}_2(\Cc)$ the collection of all $\Fc^0_T-$measurable $\Pc_2(\Cc)-$valued random variables $\mu$ such that $\mu=\Lc\big(S| \Fc^0_T \big)$ for some $S\in\Sc_2$. 

For such a $\mu\in\Pc^{\Fc^0}_2(\Cc)$, we shall denote $\mu_t:=\Lc(S_t|{\Fc^0_t})=\mu \circ (\Xh_t)^{-1}$, and $\widehat\E^\mu[\phi(\widehat X)]:=\int\phi(\widehat \omega)\mu(\drm\widehat \omega)$, for all map $\phi:\Omega\times\Cc\longrightarrow\R$ with $\mu-$integrable $\phi(\omega,.)$, $\P-$a.e. $\omega$.
\end{itemize}

\medskip
We next introduce the set $\Ac$ of scalar $\left(\Fc^0_t \otimes \Bc_{\R^2} \right)_{t \in [0,T]}$--progressively measurable maps from $[0,T] \x \Om \x \R^2$ to $\R$. For all $\mu\in\Pc^{\Fc^0}_2(\Cc) $ and $\pi \in \Ac$, we denote
\b*
m_t^{\mu,\pi}(x)
:=
 \frac{1}{1+\int_{\R} \pi_t(\hat x,x)\mu_t(\drm\hat x)},
&t\in[0,T],&x\in\R,
\e*
with the convention $\frac{1}{0}=\infty$  and we define by $\Ac^{\mu,\pi}_b$ the subset of $\Ac$ consisting of bounded $\beta \in \Ac$ such that the map from $\R$ to $\R\times\R$:
\b*
x \longmapsto m^{\mu,\pi}_t(x)\widehat{\E}^{\mu} \left[ \beta_t(x,\Xh_t) (B_t, \Sigma^0_t)(\Xh_t) \right] 
&\mbox{is continuous,}&
\mathrm{d}t \otimes \mathrm{d}\P-\mbox{a.e.}
\e*
We further define appropriate conditions so that, given $\mu\in \Pc^{\F^0}_2(\Cc)$ and $\pi\in\Ac$, the following cross--holding state dynamics induce a well defined process started from an $\Fc_0-$r.v. $X_0$ with law $\mu_0$:
\begin{equation}\label{SDE:X}
X_.
=
X_0+P_\cdot
+
\widehat\E^{\mu}\left[\int_0^\cdot  \beta_s(X_s,\widehat{X}_s) \drm\widehat{X}_s \right]
-
\int_0^\cdot  \widehat\E^{\mu}\left[\pi_s(\widehat{X}_s,{X}_s) \right]\drm{X}_s
,\;\;\P\mbox{--a.s.}
\end{equation}
with idiosyncratic risk process $P$ defined by
\begin{equation}\label{SDE:P}
P_\cdot
:=
\int_0^\cdot b_s(X_s,\mu_s) \drm s
+
\int_0^\cdot \sigma_s(X_s,\mu_s) \drm W_s
+
\int_0^\cdot \sigma^0_s(X_s,\mu_s) \drm W^0_s,\;\;\P\mbox{--a.s.}
\end{equation}
for some $\F^0\otimes\Bc_{\R}\otimes\Bc_{\Pc_2(\R)}-$progressively measurable coefficients
$$
b,\sigma,\sigma^0:[0,T]\times\Omega\times\R\times\Pc_2(\R)
\longrightarrow
\R
$$
satisfying the following conditions ensuring the existence of a unique square integrable solution for the McKean-Vlasov SDE \eqref{SDE:X} with no interaction $\beta=0$.

\begin{assumption}
For $\varphi=(b,\sigma,\sigma^0)$, the map $\varphi:[0,T]\times\Omega\times\R \times\Pc_2(\R) \longrightarrow \R$ is

{\rm (i)} Lipschitz in $(x,m)$ uniformly in $(t,\om)$, 

{\rm (ii)} and has uniform linear growth in $(x,m)$ in the following sense:
\begin{align*}
    \sup_{(t,\;\om,\;x,\;m)}\frac{|\varphi(t,\om,x,m)|}{1+|x| + \left(\int_\R x^2\;m(\mathrm{d}x) \right)^{1/2}} < \infty.
\end{align*}

\end{assumption}

\noindent The dynamics of the state process $X$ in \eqref{SDE:X} have the following interpretation:
\begin{itemize}
\item The probability measure $\mu$ is the $W^0-$conditional law of the process $\Xh$ which represents the population surrounding the representative agent with state process $X$;
\item The strategy $\pi$ represents the investment of the population in the equity of the representative agent;
\item  In our definition below of the MFG, $\Xh$ can be seen as a conditional independent copy of the equilibrium dynamics of $X$, with distribution $\mu$, and $\pi$ is an equilibrium cross--holding strategy of the surrounding population. 
\item For this reason, we say that the pair $(\mu,\pi)$ is a {\it random environment}, meaning that it describes the population of equity processes surrounding the representative agent. We then introduce the set of random environments
$$
\Ec
:=
\big\{(\mu,\pi): \mu\in\Pc_2^{\F^0}(\Cc)\;\mbox{and}\;\pi\in\Ac
\big\}.
$$
\item The strategy $\beta$ represents the investment strategy that the representative agent may implement in reaction to the random environment $(\mu,\pi)$. We naturally define below the solution of the MFG as a random environment under which the representative agent finds no benefit to deviate from the surrounding population's cross--holding strategy, and the induced state process reproduces the law of the surrounding population equity process.
\end{itemize}
In the following definition, $(B,\Sigma,\Sigma^0)$ is the triplet of coefficients corresponding to the above $\mu$, and we denote for all $\beta\in\Ac$
\be\label{barBSigma}
    \left(\!\!\begin{array}{c}
           \overline{B}^{\mu,\pi,\beta}_t
           \\
           \overline{\Sigma^0}^{\mu,\pi,\beta}_t  
           \end{array}
   \!\!\right)\!(x)
&:=&
    \int_{\R}\!\! \beta_t(x,\widehat x) \!
                      \left(\!\! \begin{array}{c}
                             B_t
                             \\
                             \Sigma^0_t
                             \end{array}
                     \!\!\right)\!(\widehat x) \mu_t(\drm\widehat x) 
                     +
                     \left(\!\!\begin{array}{c}
                            b_t
                            \\
                            \sigma^0_t 
                            \end{array}
                    \!\!\right)\!(x,\mu_t).
\ee 

\begin{definition}\label{def:admissible}
{\rm (1)} A random environment $(\mu,\pi)\in\Ec$ is admissible if 
\begin{itemize}
\item[{\rm (1-i)}] $\drm t \otimes \drm\P\mbox{--a.e.}$, $0\neq 1+\int_{\R} \pi_t(\widehat x,x)\mu_t(\drm\widehat x)=\frac{1}{m_t^{\mu,\pi}(x)}$ for all $x\in\R.$

\item[{\rm (1-ii)}] The SDE defined by the starting data $X_0$ with law $\mu_0$ and
\be \label{eq:common_main}
    \drm X_t
    &\hspace{-2mm}=&\hspace{-2mm}
    m_t^{\mu,\pi}(X_t) 
    \left(\overline{B}^{\mu,\pi,\beta}_t(X_t)  \drm t
           \!+\! \overline{\Sigma^0}^{\mu,\pi,\beta}_t(X_t) \drm W^0_t
           \!+\!\sigma_t(X_t,\mu_t) \drm W_t 
   \right)
\ee
has a weak solution, for all cross--holding strategy $\beta\in\Ac^{\mu,\pi}_b$, and satisfies  
\be \label{eq:estimates_common0}
\widehat\E^{\mu} 
\Big[\int_0^T \beta_t (X_t,\Xh_t)^2 
                     \big(|B_t|^2+ |\Sigma_t|^2+ |\Sigma^0_t|^2 \big)(\Xh_t)\mathrm{d}t \Big] < \infty,
&\P\mbox{--a.e.}&
\ee
and
\be \label{eq:estimates_common}
\E \Big[ \int_0^T m_t^{\mu,\pi}(X_t)^2 
              \big( \overline{B}^{\mu,\pi}_t(X_t)^2 
                      + \overline{\Sigma^0}^{\mu,\pi}_t(X_t)^2 
                      + \sigma_t(X_t,\mu_t)^2
              \big)\mathrm{d}t 
    \Big] 
&<& 
\infty.
\ee

\end{itemize}
{\rm (2)} A strategy $\beta\in\Ac$ is said to be $(\mu,\pi)-$admissible, and we write $\beta\in\Ac(\mu,\pi)$, if it satisfies the above \eqref{eq:common_main}, \eqref{eq:estimates_common0} and \eqref{eq:estimates_common}.
\end{definition}

Notice that $\Ac^{\mu,\pi}_b\subset\Ac(\mu,\pi)$, and that the dynamics of $X$ introduced in the last definition exactly reproduces the required cross--holding dynamics in \eqref{SDE:X}-\eqref{SDE:P}.
We also observe that, although for $\P$--a.e. $\om \in \Om$, the canonical process $\Xh$ is not a semi--martingale under $\mu(\om,d\hat\omega)$, the quantity $\E^{\mu}\big[\int_0^\cdot  \beta_s(X_s,\widehat{X}_s) \drm\widehat{X}_s \big]$ is always well defined $\P$--a.e. 

\begin{example} Let $\mu\in\Pc^{\F^0}_2(\Cc) $ and $\pi \in \Ac$ be such that $x\longmapsto m^{\mu,\pi}_t(x)$ is continuous $\drm t \!\otimes\! \mathrm{d}\P-$a.e. and
$$
\E\widehat\E^{\mu} 
\Big[ \int_0^T  \sup_{x \in \R}|m^{\mu,\pi}_t(x)|^2 \big( |B_t(\Xh_t)|^2 + |\Sigma_t(\Xh_t)|^2 + |\Sigma^0_t(\Xh_t)|^2 \big)\;\drm t \Big]<\infty,
$$
Then, we can verify that $(\mu,\pi)$ is admissible.
\end{example}

For $\beta\in\Ac(\mu,\pi)$, we denote by $X^{\mu,\pi,\beta}$ an arbitrary solution of the SDE \eqref{eq:common_main}, and we introduce the reward function
\begin{align*}
    J_{\mu,\pi}(\beta)
    :=
    \E \left[ U (X^{\mu,\pi,\beta}_T)\right],
\end{align*}
where  $U:\R\longrightarrow\R$ is a non--decreasing utility function verifying $\lim_{x \to \infty} U(x)=\infty$.

\begin{definition}\label{def:NE}
An admissible random environment $(\mu,\pi) \in \Pc_2^{\F^0}(\Cc) \x \Ac$ is an equilibrium solution of the {\rm MFG} of cross--holding if: 

\medskip
{\rm (i)} $J_{\mu,\pi}(\beta)\le J_{\mu,\pi}(\pi)
        < \infty$, for all $\beta \in \Ac(\mu,\pi)$,

\medskip
{\rm (ii)} $\pi \in \Ac(\mu,\pi)$, $\Lc(X^{\pi,\pi}|{\Fc^0_T})
        =
        \mu$, $\P$--a.e. and $\mathrm{d}t\otimes\mu_t(\drm x)-$a.e.
$$
\Sigma_t(x)=m_t^{\mu,\pi}(x)\sigma_t(x),
~B_t(x)=m_t^{\mu,\pi}(x) \overline{B}_t^{\mu,\pi,\beta}(x),
~\Sigma^0_t(x)
=
m_t^{\mu,\pi}(x) \overline{\Sigma^0}_t^{\mu,\pi,\beta}(x),
$$
with the notations of \eqref{barBSigma}.
 \end{definition}

The last equation follows from the identification of the equilibrium dynamics with the coefficients $(B,\Sigma,\Sigma^0)$ of the It\^o process $X^{\mu,\pi,\pi}:=X$ under $\mu$: 
     \begin{align*}
         X_t
         &=
         X_0
         \!+\!\int_0^tm^{\mu,\pi}(X_s)\drm P_s
         \!+\!
         \widehat\E^{\mu}\Big[\int_0^t  m^{\mu,\pi}(X_s)
                                           \pi_s(X_s,\widehat{X}_s) \drm\widehat{X}_s 
                      \Big]
\\       &=
         X_0
         \!+\!\int_0^tm^{\mu,\pi}(X_s)\drm P_s
         \!+\!
         \widehat\E^{\mu}\Big[\int_0^t\!\!  m^{\mu,\pi}(X_s)
                                           \pi_s(X_s,\!\widehat{X}_s) 
                                           \big(\!B_s(\widehat X_s)\drm s 
                                                  \!+\!\Sigma^0_s(\widehat X_s)\drm W^0_s
                                           \big)
                      \Big].
     \end{align*}
     
\section{Arbitrage free random environment}

In this section, we introduce the notion of no--increasing profit, a weaker concept of no--arbitrage introduced in the previous literature, see Fontana \cite{Fontana}. Similar to the standard no--arbitrage condition, the no--increasing profit condition is necessary for the existence of equilibrium in the context where the performance criterion is increasing in terms of the state process $X$. We shall denote by
\b*
G^{\mu,\pi}_t(\beta)
:=
\widehat\E^{\mu}\Big[\int_0^t\beta_s(X^{\mu,\pi,\beta}_s,\widehat{X}_s) 
                                              m^{\mu,\pi}(X^{\mu,\pi,\beta}_s)\drm\widehat{X}_s 
                           \Big],
&\mbox{for all}&
\beta\in\Ac(\mu,\pi).
\e*

\begin{definition}
An admissible random environment $(\mu,\pi) \in \Pc_2^{\F^0}(\Cc) \x  \Ac$ is said to have no increasing profit {\rm (NIP)} if for all strategy $\beta\in\Ac^{\mu,\pi}_b$ with $t\longmapsto G^{\mu,\pi}_t(\beta)$ nondecreasing, $\P$--a.s. we have $G^{\mu,\pi}_T(\beta)=0$, $\P-$a.s.
\end{definition}

In order to justify this definition, notice that if the admissible random environment $(\mu,\pi)$ is not free of no--increasing profit, then we may find some bounded $\beta\in\Ac^{\mu,\pi}_b$ violating the condition of the last definition. Then, it follows from Definition \ref{def:admissible} (1-ii) that $\lambda\beta\in\Ac(\mu,\pi)$ for all $\lambda>0$, and this in turn implies that $\P\big[\lim_{\lambda\to\infty}X^{\lambda\beta,\mu,\pi}_T=\infty\big]>0$. Therefore, the existence of an optimal response (i.e. a maximizer) in item (i) of Definition \eqref{def:NE} fails to hold under any increasing agent's performance criterion.

\medskip
The main result of this section states that the coefficients $(B,\Sigma^0)$ of an increasing profit free admissible random environment satisfy a proportionality condition similar to the Heath--Jarrow--Morton \cite{heath1992bond} restriction in the term structure modeling. 

 \begin{theorem} \label{thm:equivalence_NIP}
An admissible random environment $(\mu,\pi)\in \Pc_2^{\F^0}(\Cc) \x  \Ac$ satisfies the {\rm (NIP)} condition if and only if the corresponding coefficients $B$ and $\Sigma^0$ satisfy the following proportionality condition:
$$
B_t(x)
=
\Sigma^0_t(x)\lambda_t ,
\;\drm t\otimes\mu_t(\drm x)\mbox{--a.e.}
\;\;\P\mbox{--a.s.}
$$ 
for some scalar $\G-$progressively measurable process $(\lambda_t)_{t \in [0,T]}$.
\\
If in addition $(\mu,\pi)$ is an equilibrium solution of the {\rm MFG} of cross--holding, then 
    \begin{align*}
        b_t(x,\mu_t)=\lambda_t  \sigma^0_t(x,\mu_t),\;\;\drm t\otimes\mu_t(\drm x)\mbox{--a.e.}
\;\;\P\mbox{--a.s.}
    \end{align*}
 \end{theorem}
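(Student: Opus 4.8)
The plan is to mirror, at the level of continuous--time semimartingale theory, the argument used in the one--period case (Theorem \ref{NA_Theorem}). The gain process $G^{\mu,\pi}_\cdot(\beta)$ is, under the admissibility conditions \eqref{eq:estimates_common0}--\eqref{eq:estimates_common}, a stochastic integral against the $W^0$--conditional dynamics of the copy process $\Xh$; since the ``$\drm\widehat W$'' martingale part of $\widehat X$ vanishes under $\widehat\E^\mu$, one computes that
\[
G^{\mu,\pi}_t(\beta)
=
\int_0^t m^{\mu,\pi}_s(X_s)\,
\widehat\E^\mu\big[\beta_s(X_s,\Xh_s)B_s(\Xh_s)\big]\,\drm s
+\int_0^t m^{\mu,\pi}_s(X_s)\,
\widehat\E^\mu\big[\beta_s(X_s,\Xh_s)\Sigma^0_s(\Xh_s)\big]\,\drm W^0_s .
\]
Thus $G^{\mu,\pi}(\beta)$ is a continuous semimartingale whose martingale part is driven solely by $W^0$. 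The first step is the standard structural observation from the literature on no--increasing profit (Fontana \cite{Fontana}): a continuous semimartingale of this form is nondecreasing in $t$, $\P$--a.s., if and only if its $W^0$--integrand vanishes $\drm t\otimes\drm\P$--a.e.\ and its finite--variation part is nondecreasing; and then (NIP) forces that finite--variation part to be identically zero. Applying this to $\beta$ and to $-\beta$ (legitimate since $\Ac^{\mu,\pi}_b$ is symmetric and $\pm\lambda\beta\in\Ac^{\mu,\pi}_b$ is stable under scaling by Definition \ref{def:admissible} (1-ii)), one deduces that (NIP) is equivalent to
\[
\widehat\E^\mu\big[\beta_t(x,\Xh_t)\Sigma^0_t(\Xh_t)\big]=0
\ \Longrightarrow\
\widehat\E^\mu\big[\beta_t(x,\Xh_t)B_t(\Xh_t)\big]=0,
\qquad \drm t\otimes\mu_t(\drm x)\otimes\drm\P\text{--a.e.},
\]
for all $\beta\in\Ac^{\mu,\pi}_b$ — an ``orthogonality implies orthogonality'' statement at the level of the random measure $\mu_t$.

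The second step converts this into the proportionality $B_t=\lambda_t\Sigma^0_t$. Fix $(t,\om)$ outside a null set and work in $L^2(\mu_t)$, regarding $B_t(\cdot)$ and $\Sigma^0_t(\cdot)$ as elements there (square--integrability against $\mu_t$ follows from the integrability built into $\Sc_2$ and Definition \ref{def:admissible}). The implication above says: any admissible test function $\widehat x\mapsto\beta_t(x,\widehat x)$ that is $\mu_t$--orthogonal to $\Sigma^0_t$ is also $\mu_t$--orthogonal to $B_t$. Since the bounded measurable functions of $\widehat x$ realized by $\Ac^{\mu,\pi}_b$ are dense enough in $L^2(\mu_t)$ (here one uses that the continuity requirement in the definition of $\Ac^{\mu,\pi}_b$ does not shrink the relevant orthogonal complement — a truncation/mollification argument), this forces $B_t$ to lie in the line spanned by $\Sigma^0_t$ in $L^2(\mu_t)$; equivalently $B_t=\lambda_t\Sigma^0_t$, $\mu_t$--a.e., for a scalar $\lambda_t$ (set $\lambda_t=0$ on $\{\Sigma^0_t\equiv 0,\ \mu_t\text{--a.e.}\}$, using that then $B_t\equiv 0$ as well). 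Measurability of $t\mapsto\lambda_t$ as a $\G$--progressively measurable process is obtained by the explicit formula $\lambda_t=\widehat\E^\mu[B_t(\Xh_t)\Sigma^0_t(\Xh_t)]/\widehat\E^\mu[|\Sigma^0_t(\Xh_t)|^2]$ on the set where the denominator is positive. The converse direction (proportionality $\Rightarrow$ NIP) is immediate: if $B_t=\lambda_t\Sigma^0_t$ then the drift of $G^{\mu,\pi}(\beta)$ equals $\lambda_t$ times its $W^0$--integrand, so nondecreasingness of $t\mapsto G^{\mu,\pi}_t(\beta)$ forces the $W^0$--integrand, hence the drift, hence $G^{\mu,\pi}_T(\beta)$, to be zero.

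For the final assertion, assume $(\mu,\pi)$ is an equilibrium. By Definition \ref{def:NE} (ii), $B_t(x)=m^{\mu,\pi}_t(x)\,\overline B^{\mu,\pi,\pi}_t(x)$ and $\Sigma^0_t(x)=m^{\mu,\pi}_t(x)\,\overline{\Sigma^0}^{\mu,\pi,\pi}_t(x)$, $\drm t\otimes\mu_t(\drm x)$--a.e., and by \eqref{barBSigma}
\[
\overline B^{\mu,\pi,\pi}_t(x)=\widehat\E^\mu\big[\pi_t(x,\Xh_t)B_t(\Xh_t)\big]+b_t(x,\mu_t),
\qquad
\overline{\Sigma^0}^{\mu,\pi,\pi}_t(x)=\widehat\E^\mu\big[\pi_t(x,\Xh_t)\Sigma^0_t(\Xh_t)\big]+\sigma^0_t(x,\mu_t).
\]
Substituting the proportionality $B_t(\Xh_t)=\lambda_t\Sigma^0_t(\Xh_t)$ (valid $\mu_t$--a.e., and $\lambda_t$ being $\G$--measurable pulls out of the $\widehat\E^\mu$ integral) into the first identity gives
$\overline B^{\mu,\pi,\pi}_t(x)=\lambda_t\big(\widehat\E^\mu[\pi_t(x,\Xh_t)\Sigma^0_t(\Xh_t)]+\sigma^0_t(x,\mu_t)\big)+b_t(x,\mu_t)-\lambda_t\sigma^0_t(x,\mu_t)
=\lambda_t\,\overline{\Sigma^0}^{\mu,\pi,\pi}_t(x)+\big(b_t(x,\mu_t)-\lambda_t\sigma^0_t(x,\mu_t)\big)$.
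On the other hand $\overline B^{\mu,\pi,\pi}_t(x)=B_t(x)/m^{\mu,\pi}_t(x)=\lambda_t\Sigma^0_t(x)/m^{\mu,\pi}_t(x)=\lambda_t\,\overline{\Sigma^0}^{\mu,\pi,\pi}_t(x)$, and comparing the two expressions yields $b_t(x,\mu_t)=\lambda_t\sigma^0_t(x,\mu_t)$, $\drm t\otimes\mu_t(\drm x)$--a.e., $\P$--a.s., as claimed.

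The main obstacle I anticipate is the second step: making rigorous the passage from ``orthogonality against all admissible $\beta$'' to pointwise--in--$\om$ proportionality in $L^2(\mu_t)$. The delicate points are (a) handling the continuity constraint in the definition of $\Ac^{\mu,\pi}_b$ so that the test functions are dense enough to conclude, which requires a careful truncation/regularization of candidate $\beta$'s while preserving admissibility and the sign structure; (b) the joint measurability in $(t,\om)$ needed to produce $\lambda$ as a genuine $\G$--progressively measurable process rather than merely a family of scalars, for which the quotient formula above together with a measurable--selection or a.e.--defined--limit argument is the natural route; and (c) justifying the interchange of $\widehat\E^\mu$ with the stochastic integral in the very first display, which uses stochastic Fubini and the integrability hypotheses \eqref{eq:estimates_common0}--\eqref{eq:estimates_common}.
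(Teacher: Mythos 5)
Your overall route is the same as the paper's: reduce (NIP) to the implication ``$\widehat\E^\mu[\beta_t\Sigma^0_t(\Xh_t)]=0\Rightarrow\widehat\E^\mu[\beta_tB_t(\Xh_t)]=0$'' via the semimartingale decomposition of $G^{\mu,\pi}(\beta)$, then obtain $B_t=\lambda_t\Sigma^0_t$ by an $L^2(\mu_t)$--projection with truncation, prove the converse by observing that the drift is $\lambda_t$ times the $W^0$--integrand, and get $b=\lambda\sigma^0$ from the equilibrium fixed--point identities \eqref{coefEq}. The last three of these steps match the paper essentially verbatim, and your worry in (a) about density of test functions is unnecessary: it suffices to test against the \emph{single} bounded residual $\nu^{c,N}_t(\widehat x):=\big(B_t(\widehat x)-\lambda^{c,N}_t\Sigma^0_t(\widehat x)\big)\1_{A^{c,N}_t}(\widehat x)$, which is $\mu_t$--orthogonal to $\Sigma^0_t$ by construction of the projection coefficient, so the implication directly gives $\widehat\E^\mu\big[|\nu^{c,N}_t(\Xh_t)|^2\big]=0$; this is exactly how the paper proceeds and it sidesteps the continuity constraint in $\Ac^{\mu,\pi}_b$ entirely.

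There is, however, one step that fails as written: deducing the orthogonality implication from (NIP) ``by applying it to $\beta$ and to $-\beta$.'' If $\widehat\E^\mu[\beta_t\Sigma^0_t(\Xh_t)]=0$, then $G^{\mu,\pi}(\beta)$ is indeed of finite variation, but its drift $m^{\mu,\pi}_t\,\widehat\E^\mu[\beta_tB_t(\Xh_t)]$ will in general change sign in $(t,\om)$, so neither $G^{\mu,\pi}(\beta)$ nor $G^{\mu,\pi}(-\beta)$ is nondecreasing and (NIP) cannot be invoked on either. The missing idea is the sign--flip construction: for $\beta$ depending on $\widehat x$ only, set $\beta^*_t(x,\widehat x):=s^0_t(x)\beta_t(\widehat x)$ with $s^0_t(x):={\rm sgn}\big\{m^{\mu,\pi}_t(x)\widehat\E^\mu[\beta_t(\Xh_t)B_t(\Xh_t)]\big\}$. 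Since $s^0$ depends only on $(t,\om,x)$ it does not destroy the orthogonality to $\Sigma^0$, the martingale part of $G^{\mu,\pi}(\beta^*)$ still vanishes, and its drift becomes the absolute value $\big|m^{\mu,\pi}_t\widehat\E^\mu[\beta_tB_t(\Xh_t)]\big|\ge 0$, so $G^{\mu,\pi}(\beta^*)$ is nondecreasing and (NIP) forces it, hence the original drift, to vanish. This is precisely the paper's step (ii)(a); without it the forward direction of your claimed equivalence does not go through.
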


\proof {\bf (i)} We start with the last part of the statement, given the characterization of the NIP condition which we prove in (ii)-(iii) below. Since $(\mu,\pi)$ is an equilibrium solution of the {\rm MFG} of cross--holding, we know that  $\P$--a.e., 
$\mathrm{d}t\otimes\mu_t(\drm x)-$a.e.
\be\label{coefEq}
\Big(\!\!\begin{array}{c}B_t\\ \Sigma^0_t\end{array}\!\!\Big)(x) 
=
m_t^{\mu,\pi}(x) 
\Big( \int_{\R} \pi_t(x,\widehat x) 
                      \Big(\!\!\begin{array}{c}B_t\\ \Sigma^0_t\end{array}\!\!\Big)
                      (\widehat x)\mu_t(\drm\widehat x) 
        + \Big(\!\!\begin{array}{c}b_t\\ \sigma^0_t\end{array}\!\!\Big)(x,\mu_t)  
\Big).
\ee
Using the characterization $B
=
\Sigma^0\lambda$, $\drm t\otimes\mu_t(\drm x)-$a.e., $\P-$a.s. of the NIP condition stated in the first part, it follows from the first equation in \eqref{coefEq} that
\begin{align*}
    m^{\mu,\pi}_t(x) b_t(x,\mu_t)
    &= B_t(x) - m^{\mu,\pi}_t(x) \int_\R \pi_t(x,\widehat x) B_t(\widehat x) \mu_t(\mathrm{d}\widehat x)
    \\
    &= \lambda_t \left( \Sigma^0_t(x) - m^{\mu,\pi}_t(x) \int_\R \pi_t(x,\widehat x) \Sigma^0_t(\widehat x) \mu_t(\mathrm{d}\widehat x) \right)
    =
    \lambda_t m^{\mu,\pi}_t(x) \sigma^0_t(x,\mu_t),
\end{align*}
by the second equation in \eqref{coefEq}. The required result follows from the fact hat $m^{\mu,\pi}_t(x) \neq 0$, due to the admissibility of the random environment $(\mu,\pi)$.

\medskip
\noindent {\bf (ii)} We next prove the necessary condition for the characterization of the NIP condition. Let $c >0$ and $N \ge 1$, and set 
$$
    H^{c,N}_t(\lambda):= \widehat{\E}^{\mu} \left[ \left|B_t(\Xh_t)- \lambda \Sigma^0_t(\Xh_t) \right|^2 \1_{A^{c,N}_t}(\Xh_t)\right] 
$$   
where
$$
    A^{c,N}_t:=\left\{ x:\; |B_t(x)|^2 + |\Sigma^0_t(x)|^2 \le N, \;\;c \le |\Sigma^0_t(x)| \right\}.
$$
Then, the $\F^0-$progressively measurable process 
$$
\lambda^{c,N}_t
:=
\frac{\widehat \E^\mu\left[\Sigma^0_t(\widehat X_t)B_t(\widehat X_t) \1_{A^{c,N}_t}(\Xh_t) \right]}
       {\widehat \E^\mu\left[\Sigma^0_t(\widehat X_t)^2 \1_{A^{c,N}_t}(\Xh_t)\right]},
\;\;t\in[0,T],
$$
is a minimizer of $H^{c,N}_t$ over the set of $\F^0-$progressively measurable processes $\lambda$. We next introduce the scalar $\left(\Fc^0_t \otimes \Bc(\R) \right)_{t \in [0,T]}$--progressively measurable  process 
\be\label{almostprop}
\nu^{c,N}_t(\widehat X_t)
:=
\left(B_t(\widehat X_t)-\Sigma^0_t(\widehat X_t)\lambda^{c,N}_t \right) \1_{A^{c,N}_t}(\Xh_t),
\ee
which satisfies
\begin{equation}\label{nu}
\begin{array}{c}
\widehat \E^\mu\big[\Sigma^0_t(\widehat X_t)\1_{A^{c,N}_t}(\Xh_t)\hat\nu^{c,N}_t(\Xh_t)\big]=0,
~~\P-\mbox{a.s.}
\;\;t\in[0,T].
\end{array}
\end{equation}
We now claim that the {\rm NIP} condition implies that 
\be\label{claim}
\hat\nu^{c,N} &=& 0.
\ee
Before proving this claim, let us show how it induces the required necessary condition. Since $\mu \in \Pc^{\F^0}_2(\Cc)$, we have $\1_{A^{c,N}_t}(x)\longrightarrow 1$, as $N \nearrow \infty$ and $c \searrow 0$, $\mathrm{d}t \otimes \mu_t(\mathrm{d}x)\;\mbox{--a.e.}$ Moreover, it follows from \eqref{claim} and \eqref{almostprop} that the process $\lambda_t:=\limsup_{N \to \infty,\; c \to 0}\lambda^{c,N}_t$, satisfies the required proportionality condition $B=\Sigma^0\lambda$, $\drm t\otimes\mu_t(\drm x)-$a.e., $\drm\P-$a.s.

To complete the proof of the necessary conditions, we prove \eqref{claim} in the two following sub--steps.

\begin{itemize}
\item[{\bf (a)}] We first show that the {\rm NIP} condition implies that for all bounded $\R$--valued $\left(\Fc^0_t \otimes \Bc(\R) \right)_{t \in [0,T]}$--progressively measurable process $\beta$:
\begin{equation}\label{ac}
\widehat \E^\mu\big[(\beta_t\Sigma^0_t)(\widehat X_t)\big]
\!=\!
0,
~\drm t\otimes\drm\P\!-\!\mbox{a.e.}
~\mbox{implies}~
\widehat\E^\mu\big[(\beta_tB_t)(\Xh_t)\big]\!=\!0,
\;\drm t\otimes\drm\P\!-\!\mbox{a.e.}
\end{equation}

Indeed, assume to contrary that $\widehat\E^\mu\big[\beta_t(\widehat X_t)\Sigma^0_t(\widehat X_t)\big]=0,$ $\drm t\otimes\drm\P\mbox{--a.e.}$ and   
\b*
\int_0^T \int_{\Om} \1_{A}(t,\om) \P(\mathrm{d}\om) \mathrm{d}t >0
&\mbox{where}&
    A
    :=
    \left\{\widehat\E^{\mu}\left[ \beta_t( \widehat X_t) B_t(\widehat X_t)\right] \neq 0   \right\}.
\e*
for some bounded $\beta$. We define $\beta^*_t(x,\widehat X_t):=s^0_t(x)\beta_t(\widehat X_t)$, with $s^0_t(x):={\rm sgn}\left\{m^{\mu,\pi}_t(x)\widehat\E^\mu[\beta_t(\widehat X_t)B(t, \widehat{X}_t)]\right\}$. Then $\beta^* \in \Ac(\mu,\pi)$ is admissible, and
\begin{align*}
    G^{\mu,\pi}_t(\beta^*) 
    &=
    \widehat\E^{\mu} \left[ \int_0^t m^{\mu,\pi}_s(X^{\mu,\pi,\beta^*}_s)
                                       \beta^*_s(X^{\mu,\pi,\beta^*}_s,\widehat{X}_s) 
                                       \mathrm{d}\widehat{X}_s 
                  \right] 
    \\
    &=
    \widehat\E^{\mu} \left[ \int_0^t m^{\mu,\pi}_s(X^{\mu,\pi,\beta^*}_s)
                                       \beta^*_s(X^{\mu,\pi,\beta^*}_s, \widehat{X}_s) 
                                       B(s, \widehat{X}_s) \mathrm{d}s 
                  \right] 
    \\
    &=
     \int_0^t \left|m^{\mu,\pi}_s(X^{\mu,\pi,\beta^*}_s)
                    \widehat\E^{\mu}\left[\beta_s( \widehat{X}_s) B(s, \widehat{X}_s) 
                                      \right]
                 \right|  \mathrm{d}s,
\end{align*}
Consequently $t\longmapsto G^{\mu,\pi}_t(\beta^\star)$ is non--decreasing, $\P$--a.s. Since $m^{\mu,\pi}_t(X^{\mu,\pi,\beta^*}_t ) \neq 0$ $\mathrm{d}t \otimes \mathrm{d}\P$--a.e., by definition of the set $A$, we deduce that $\P\big[G^{\mu,\pi}_T(\beta^\star)\!>\!0\big] >0$. This is in contradiction with the non--increasing profit condition  {\rm (NIP)}. 

\item[{\bf (b)}] We now show that \eqref{ac} implies that $\hat\nu^{c,N}_t(\Xh_t)=0$. Notice that $\sup_{(t,\om)} \lambda^{c,N}_t(\om) < \infty$. Then, $\left( \nu^{c,N}_t(x) \1_{A^{c,N}_t}(x)\right)_{(t,x) \in [0,T] \x \R}$ is a bounded scalar $\left(\Fc^0_t \otimes \Bc(\R) \right)_{t \in [0,T]}$--prog. meas. process. By \eqref{nu}, this process satisfes the left hand side of \eqref{ac}, and it follows from (a) that the right hand side of \eqref{ac} holds, namely $\widehat \E^\mu\big[B_t(\widehat X_t)\hat\nu^{c,N}_t(\Xh_t) \1_{A^{c,N}_t}(\Xh_t)\big]=0$. By the definition of $\hat\nu^{c,N}$ in \eqref{almostprop} together with \eqref{nu}, we get
\begin{align*}
    \widehat \E^\mu\big[\hat\nu^{c,N}_t(\Xh_t)^2 \1_{A^{c,N}_t}(\Xh_t)\big]=0.
\end{align*}
which provides the the required \eqref{claim}.
\end{itemize}
\noindent {\bf (iii)} We finally prove the sufficient condition for the characterization of the NIP condition. Assume that $B_t(x)
=
\Sigma^0_t(x)\lambda_t ,
\;\drm t\otimes\mu_t(\drm x)-\mbox{a.e.}
$, $\P$--a.e. for some $\F^0-$prog. meas. process $\lambda$.
then the NIP condition is true. Let $\beta \in \Ac(\mu,\pi)$ be such that the process $(G^{\mu,\pi}_t(\beta))_{t \in [0,T]}$ is non--decreasing, and let us show that the process $G^{\mu,\pi}(\beta)$ is zero, $\P-$a.s. 

As a non-increasing process, $G^{\mu,\pi}(\beta)$ has finite variation, implying that its martingale part is zero:
\begin{align*}
    0=R_t
    &:=
     \int_0^t \widehat\E^{\mu} \left[ \beta_s(X^{\mu,\pi,\beta}_s, \widehat{X}_s) m^{\mu,\pi}_s(X^{\mu,\pi,\beta}_s) \Sigma^0_s( \widehat{X}_s) \right]  \mathrm{d}W^0_s
    \\
    &=G_t^{\mu,\pi}(\beta)- \int_0^t \widehat\E^{\mu} \left[\beta_s(X^{\mu,\pi,\beta}_s, \widehat{X}_s) m^{\mu,\pi}_s(X^{\mu,\pi,\beta}_s) B_s( \widehat{X}_s) \right]  \mathrm{d}s,
~~t\in[0,T].
\end{align*}
As $\lambda$ is $\F^0-$progressively measurable, this can be written equivalently as  
\b*
G_t^{\mu,\pi}(\beta)
= 
\int_0^t \lambda_s \widehat\E^{\mu} \left[\beta_s(X^{\mu,\pi,\beta}_s, \widehat{X}_s) m^{\mu,\pi}_s(X^{\mu,\pi,\beta}_s) \Sigma^0_s(\widehat{X}_s) \right]  \mathrm{d}s,
&\mbox{for all}&
t\in[0,T].
\e*
On the other hand, we have that 
$$
\int_0^t \lambda_s \1_{|\lambda^0_s| \le N} \widehat\E^{\mu} \left[\beta_s(X^{\mu,\pi,\beta}_s, \widehat{X}_s) m^{\mu,\pi}_s(X^{\mu,\pi,\beta}_s) \Sigma^0_s(\widehat{X}_s) \right]  \mathrm{d}s
=
\left \langle \int_0^\cdot \lambda_s \1_{|\lambda^0_s| \le N}  \mathrm{d}W^0_s, R   \right\rangle_t =0.
$$
Moreover, as $\beta \in \Ac(\mu,\pi)$ is an admissible strategy, we have 
$$
\int_0^t  \E \widehat\E^{\mu} \left[ \left|\lambda_s\beta_s(X^{\mu,\pi,\beta}_s, \widehat{X}_s) m^{\mu,\pi}_s(X^{\mu,\pi,\beta}_s) \Sigma^0_s(\widehat{X}_s) \right| \right]  \mathrm{d}s
< \infty.
$$
We may then take the limit $N \to \infty$ in the last equality, and deduce from the dominated convergence Theorem that $G_t^{\mu,\pi}(\beta)=0$, $t \in [0,T]$.
\ep

\section{The reduced mean field game}

In this section, we derive an important reduction of the mean field game of cross-holding. Recall that any admissible random environment $(\mu,\pi) \in \Pc_2^{\F^0}(\Cc) \x \Ac$ which is a solution of the mean field game satisfies the No--Increasing Profit condition. By Theorem \ref{thm:equivalence_NIP}, we have the following proportionality relation between the drift and the volatility of the common noise:
\b*
B_t(x)=\lambda_t\Sigma^0_t(x)
&\mbox{and}&
b_t(x)=\lambda_t\sigma^0_t(x),
~\drm t\otimes\mu_t(\drm x)-\mbox{a.e. }
\P-\mbox{a.s.}
\e*
for some $\F^0-$progressively measurable process $(\lambda_t)_{t \in [0,T]}$. Here, for notational simplicity, we have omitted the dependence of the coefficients on the marginal law $\mu_t$ because this is not playing any role. Plugging this into the dynamics of the equity process \eqref{eq:common_main}, we see that 
 \be\label{X:alpha}
    \drm X^{\mu,\pi,\beta}_t
    &\hspace{-2mm}=&\hspace{-2mm}
    A^{\mu,\pi}_t\big(X^{\mu,\pi,\beta}_t,\beta_t\big) 
    \big(\lambda_t\drm t+\drm W^0_t\big)
    +v^{\mu,\pi}_t\big(X^{\mu,\pi,\beta}_t\big) \drm W_t,
\ee
where
\b*
A^{\mu,\pi}_t(x,\beta_t)
:=
m_t^{\mu,\pi}(x)
\overline{\Sigma^0}^{\mu,\pi,\beta}_t(x),
&\mbox{and}&
v^{\mu,\pi}_t(x)
:=
m_t^{\mu,\pi}(x)
\sigma_t(x),
\e*
where we recall the notation of \eqref{barBSigma}:
$$
\overline{\Sigma^0}^{\mu,\pi,\beta}_t(x)
=
\Big[\sigma^0_t(x)
       +\!\int_{\R}\!\beta_t(x,\widehat x)\Sigma^0_t(\widehat x)\mu_t(\drm\widehat x)
\Big].
$$
Motivated by this reduced form, we introduce the reduced performance criterion $\J$ defined by
$$
\J_{\mu,\pi}(\alpha)
:=
\E\big[U\big(\X^{\mu,\pi,\alpha}_T\big)\big],
$$
where the reduced controlled state $\X$ is defined by 
\be\label{SDEdbX}
\drm\X^{\mu,\pi,\alpha}_t
&=&
\alpha_t\big(\X^{\mu,\pi,\alpha}_t\big)
             \big(\lambda_t\drm t + \drm W^0_t\big)
+v^{\mu,\pi}_t\big(\X^{\mu,\pi,\alpha}_t\big)\drm W_t,
\ee
and the reduced control process
\begin{equation*}
\begin{array}{c}
\alpha:[0,T]\times\Omega\times\R\longrightarrow\R,
~\F^0\otimes\Bc_\R-\mbox{prog. meas. such that}
\\
\displaystyle
\mbox{\eqref{SDEdbX} has a strong solution satisfying}~
\end{array}
\end{equation*}
\begin{align}\label{alpha}
    \E\left[\int_0^T\!\!\alpha_t(\X^{\mu,\pi,\alpha}_t)^2(1+|\lambda_t|^2) + m_t^{\mu,\pi}(\X^{\mu,\pi,\alpha}_t)^2 \sigma_t\left(\X^{\mu,\pi,\alpha}_t \right)^2\mathrm{d}t\right]<\infty.
\end{align}
The reduced control $\alpha$ represents the aggregate instantaneous return from the cross--holding strategy $\beta$ representing the investment of the representative agent in the surrounding environment. 

The main consequence of the NIP condition, which we state below, says that the individual optimization step of any equilibrium solution of the mean field game of cross--holding reduces to the problem $\sup_\alpha \J_{\mu,\pi}(\alpha)$. Observe that the last problem is nothing but the standard portfolio optimization problem under random endowment represented by the additional term $v^{\mu,\pi}_t\big(\X^{\mu,\pi,\alpha}_t\big)\drm W_t$. Such problems have been studied extensively in the previous literature, and are well--known to differ from the standard Merton portfolio optimization problem only when the endowment process introduces an additional risk, because otherwise it can be perfectly hedged thus reducing to the classical situation. In contrast, our reduced model is a typical incomplete market situation as the random endowment process is driven by the idiosyncratic noise $W$, while the portfolio optimization acts on the common noise $W^0$. The existing literature is mainly focused on the convex duality representation of the problem, see e.g. Cvitani\'c, Schachermayer \& Wang \cite{CvitanicSchachermayerWang}, Hugonnier \& Kramkov \cite{HugonnierKramkov}, and also extensions by Karatzas \& Zitkovi\'c \cite{KaratzasZitkovic},
Zitkovi\'c \cite{Zitkovic} and Mostovyi \cite{Mostovyi} when the intermediate consumption is allowed. 

\begin{proposition} \label{prop:equivence_MFG}
Let $(\mu,\pi) \in \Pc_2^{\F^0}(\Cc) \x \Ac$ be an admissible random environment with $\mu_t\big[\Sigma^0_t\neq 0\big]>0$, $\P-$a.s. Then for any $\beta \in \Ac(\mu,\pi)$, there is $\alpha$ satisfying \eqref{alpha} such that 
\begin{align} \label{eq:equivalence-diffusion}
\X^{\mu,\pi,\alpha}=X^{\mu,\pi,\beta},\;\;\P\mbox{--a.e.}
\end{align}
Conversely, for any $\alpha$ satisfying \eqref{alpha}, there is $\beta \in \Ac(\mu,\pi)$ such that  \eqref{eq:equivalence-diffusion} holds.

\medskip
In addition, if we set $\alpha^{\mu,\pi}_t(x):=A^{\mu,\pi}_t(x,\pi_t)$ for all $(t,x)\in[0,T]\times\R$, the pair $(\mu,\pi)$ is an equilibrium solution of the {\rm MFG} of cross--holding if and only if: 

\vspace{2mm}
{\rm (i)} $\J_{\mu,\pi}(\alpha)\le \J_{\mu,\pi}(\alpha^{\mu,\pi})
        < \infty$, for all $\alpha$ satisfying \eqref{alpha},

\vspace{2mm}
{\rm (ii)} $\Lc\big(\X^{\mu,\pi,\alpha^{\mu,\pi}}|{\Fc^0_T}\big)
        =
        \mu$, $\P$--a.e.
\end{proposition}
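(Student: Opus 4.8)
The plan is to prove the equivalence of the reduced model in two stages: first the pathwise correspondence between controls $\beta$ and reduced controls $\alpha$ yielding the same state process, and then the translation of the MFG equilibrium conditions of Definition~\ref{def:NE} into conditions (i)--(ii) on the reduced problem. For the first stage, the key observation is already visible in the dynamics \eqref{X:alpha}: given $\beta\in\Ac(\mu,\pi)$, set $\alpha_t(\cdot):=A^{\mu,\pi}_t(\cdot,\beta_t)=m_t^{\mu,\pi}(\cdot)\overline{\Sigma^0}^{\mu,\pi,\beta}_t(\cdot)$, so that \eqref{X:alpha} is literally \eqref{SDEdbX} with $v^{\mu,\pi}$ unchanged; uniqueness of the solution (strong, under the Lipschitz/growth Assumption on $\sigma$ and the admissibility estimates) forces $\X^{\mu,\pi,\alpha}=X^{\mu,\pi,\beta}$ $\P$--a.s. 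The integrability requirement \eqref{alpha} for this $\alpha$ must be checked: the term $\alpha_t^2(1+|\lambda_t|^2)$ is controlled using $|B_t|=|\lambda_t\Sigma^0_t|$ and the admissibility estimates \eqref{eq:estimates_common0}--\eqref{eq:estimates_common} (noting $|\overline{B}^{\mu,\pi,\beta}_t|=|\lambda_t||\overline{\Sigma^0}^{\mu,\pi,\beta}_t|$ via NIP), and the second term is exactly the $\sigma$--term in \eqref{eq:estimates_common}. For the converse, given $\alpha$ satisfying \eqref{alpha} and using the hypothesis $\mu_t[\Sigma^0_t\neq 0]>0$ $\P$--a.s., one can solve for a bounded or suitably integrable $\beta$ of product form, e.g. $\beta_t(x,\widehat x):=\big(\alpha_t(x)/m_t^{\mu,\pi}(x)-\sigma^0_t(x)\big)\Sigma^0_t(\widehat x)\big/\widehat\E^\mu[\Sigma^0_t(\widehat X_t)^2]$, so that $\overline{\Sigma^0}^{\mu,\pi,\beta}_t(x)=\alpha_t(x)/m_t^{\mu,\pi}(x)$, giving the reverse identification; again one verifies $\beta\in\Ac(\mu,\pi)$ by checking \eqref{eq:common_main}, \eqref{eq:estimates_common0}, \eqref{eq:estimates_common}, using the nondegeneracy to bound the reciprocal of $\widehat\E^\mu[\Sigma^0_t(\widehat X_t)^2]$.

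For the second stage, note that $J_{\mu,\pi}(\beta)=\E[U(X^{\mu,\pi,\beta}_T)]=\E[U(\X^{\mu,\pi,\alpha}_T)]=\J_{\mu,\pi}(\alpha)$ whenever $\alpha$ and $\beta$ correspond as above, and the correspondence is onto in both directions between $\Ac(\mu,\pi)$ and the set of reduced controls satisfying \eqref{alpha}. In particular $\pi$ itself corresponds to $\alpha^{\mu,\pi}_t(x)=A^{\mu,\pi}_t(x,\pi_t)$, so $J_{\mu,\pi}(\pi)=\J_{\mu,\pi}(\alpha^{\mu,\pi})$. Hence item (i) of Definition~\ref{def:NE} --- $J_{\mu,\pi}(\beta)\le J_{\mu,\pi}(\pi)<\infty$ for all admissible $\beta$ --- holds if and only if $\J_{\mu,\pi}(\alpha)\le\J_{\mu,\pi}(\alpha^{\mu,\pi})<\infty$ for all $\alpha$ satisfying \eqref{alpha}, which is exactly (i) of the Proposition. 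For the fixed--point condition, observe that $X^{\mu,\pi,\pi}$ and $\X^{\mu,\pi,\alpha^{\mu,\pi}}$ are $\P$--a.s.\ equal, so $\Lc(X^{\pi,\pi}|\Fc^0_T)=\mu$ is the same statement as $\Lc(\X^{\mu,\pi,\alpha^{\mu,\pi}}|\Fc^0_T)=\mu$, giving (ii). The coefficient identification in Definition~\ref{def:NE}(ii) (the equations $\Sigma_t=m_t^{\mu,\pi}\sigma_t$, $B_t=m_t^{\mu,\pi}\overline B_t^{\mu,\pi,\beta}$, $\Sigma^0_t=m_t^{\mu,\pi}\overline{\Sigma^0}_t^{\mu,\pi,\beta}$) is, as explained after that definition, an automatic consequence of the state dynamics together with $\Lc(X^{\pi,\pi}|\Fc^0_T)=\mu$; and conversely any equilibrium of the MFG satisfies NIP (Section on arbitrage--free environments), so the passage to the $\lambda$--form \eqref{X:alpha} is legitimate at the outset.

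The main obstacle I anticipate is not the logical skeleton but the bookkeeping in the control correspondence: one must exhibit an explicit admissible $\beta$ realizing a given $\alpha$ --- the product--form choice above is the natural candidate, but showing it lands in $\Ac(\mu,\pi)$ requires the nondegeneracy hypothesis $\mu_t[\Sigma^0_t\neq 0]>0$ to be used quantitatively (to keep $1/\widehat\E^\mu[\Sigma^0_t(\widehat X_t)^2]$ under control $\drm t\otimes\drm\P$--a.e.), and requires verifying the weak--solution and integrability clauses \eqref{eq:common_main}--\eqref{eq:estimates_common}. A secondary subtlety is that $\beta$ in $\Ac(\mu,\pi)$ need not be bounded (only $\Ac^{\mu,\pi}_b\subset\Ac(\mu,\pi)$), so the correspondence must be set up on all of $\Ac(\mu,\pi)$ rather than merely on the bounded subclass; this is harmless since the reduced integrability condition \eqref{alpha} matches the admissibility estimates exactly under NIP, but it needs to be stated carefully. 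Everything else --- the identity $J_{\mu,\pi}=\J_{\mu,\pi}$ under the correspondence, and the term--by--term translation of Definition~\ref{def:NE} --- is then immediate.
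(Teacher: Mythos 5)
Your proposal follows essentially the same route as the paper: identify $\alpha$ with $A^{\mu,\pi}_\cdot(\cdot,\beta_\cdot)$ so that \eqref{X:alpha} and \eqref{SDEdbX} coincide, construct an explicit inverse $\beta$ for a given $\alpha$ using the nondegeneracy $\mu_t[\Sigma^0_t\neq 0]>0$, and then translate Definition~\ref{def:NE} term by term via $J_{\mu,\pi}(\beta)=\J_{\mu,\pi}(\alpha)$. The one substantive point where your construction is weaker than the paper's is the choice of the inverse: you take $\beta_t(x,\widehat x)\propto\Sigma^0_t(\widehat x)\big/\widehat\E^{\mu}[\Sigma^0_t(\widehat X_t)^2]$, so that verifying the admissibility estimate \eqref{eq:estimates_common0} requires control of $\widehat\E^{\mu}\big[\Sigma^0_t(\widehat X_t)^2\,(|B_t|^2+|\Sigma_t|^2+|\Sigma^0_t|^2)(\widehat X_t)\big]$, i.e.\ essentially fourth moments of $\Sigma^0$ under $\mu_t$, which the definition of $\Sc_2$ does not provide. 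The paper instead sets $\beta_t(x,\widehat x)=f_t(x)\,{\rm sg}[\Sigma^0_t(\widehat x)]$ with $f_t(x)$ determined by $\alpha_t(x)/m_t^{\mu,\pi}(x)=\sigma^0_t(x)+f_t(x)\int_{\R}|\Sigma^0_t(\widehat x)|\,\mu_t(\drm\widehat x)$; this $\beta$ is bounded in $\widehat x$ for fixed $x$, so \eqref{eq:estimates_common0} reduces to the second--moment bounds already built into $\Pc_2^{\F^0}(\Cc)$, and the denominator $\widehat\E^{\mu}[|\Sigma^0_t(\widehat X_t)|]$ is strictly positive by the same nondegeneracy hypothesis. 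If you replace your product--form inverse by this sign--normalized one (or add a fourth--moment hypothesis), the rest of your argument — including the observation that the coefficient identification in Definition~\ref{def:NE}(ii) is automatic and that NIP legitimizes the $\lambda$--form at the outset — matches the paper's proof.
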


\proof
First, let us observe that for any $\alpha$ satisfying \eqref{alpha}, there is $\beta \in \Ac(\mu,\pi)$ admissible s.t. $\alpha_t\left( \X^{\mu,\pi,\alpha}_t\right)=A^{\mu,\pi}_t\left( \X^{\mu,\pi,\alpha}_t,\beta_t \right)$ $\mathrm{d}t \otimes \mathrm{d}\P$--a.e. Indeed, by setting $\beta_t(x,\hat x)=f_t(x){\rm sg}[\Sigma^0_t(\hat x)]$, with
$$
\frac{\alpha_t(x)}{m_t^{\mu,\pi}(x)}
=
\sigma^0_t(x)+f_t(x)\int_{\R}|\Sigma^0_t(\hat x)|\mu_t(\drm\hat x),
$$
This clearly defines $f_t(x),(t,x)\in[0,T]\times\R$, since $\mu_t\big[\Sigma^0_t\neq 0\big]>0$, $\P-$a.s.
We can see that $\X^{\mu,\pi,\alpha}=X^{\mu,\pi,\beta}$ $\P$--a.e. and check that $\beta \in \Ac(\mu,\pi)$. Conversely, for any $\beta \in \Ac(\mu,\pi)$, by defining $\alpha^{\mu,\pi,\beta}_t\left( x\right):=A^{\mu,\pi}_t\left( x,\beta_t \right)$, we can check that $\alpha^{\mu,\pi,\beta}$ satisfies \eqref{alpha} and $\X^{\mu,\pi,\alpha^{\mu,\pi,\beta}}=X^{\mu,\pi,\beta},$ $\P$--a.e.

\medskip
Let $(\mu,\pi)$ be an equilibrium solution of the {\rm MFG} of cross--holding. Then
\begin{align*}
    \J_{\mu,\pi}(\alpha)
    =
    J_{\mu,\pi}(\beta) \le J_{\mu,\pi}(\pi)= \J_{\mu,\pi}(\alpha^{\mu,\pi})
\end{align*}
and $\Lc\big(\X^{\mu,\pi,\alpha^{\mu,\pi}}|{\Fc^0_T}\big)=\Lc\big(X^{\mu,\pi,\pi}|{\Fc^0_T}\big)=\mu$. The converse equivalence follows by using similar arguments.
\ep

\vspace{3mm}
We conclude this section by isolating a simple result which allows to recover a cross--holding strategy from  some given aggregate hedging strategy $\alpha$ which is eligible for an equilibrium solution of the mean field game. 

\begin{proposition} \label{prop:find_pi}
    Let $\mu \in \Pc_2^{\F^0}(\Cc)$ and $\alpha$ be a scalar $(\Fc_t^0 \otimes \Bc(\R))_{t \in [0,T]}$--progressively measurable process satisfying 
\be \label{eq:condition_sigma0}
\E\left[ \widehat{\E}^{\mu} \int_0^T \alpha_t(\widehat{X}_t)^2 (1+|\lambda_t|^2) \mathrm{d}t \right]< \infty
&\mbox{and}&
\widehat{\E}^{\mu}\left[\sigma^0_t(\widehat{X}_t) \right]= \widehat{\E}^{\mu}\left[\alpha_t(\widehat{X}_t) \right] \neq 0.
\ee 
For an arbitrary scalar $\F^0$--prog. meas. process $\kappa\neq-1$, $\P-$a.s. let
    \b*
        \pi_t(x,y):=\frac{(1+\kappa_t) \alpha_t(x) - \sigma^0_t(x)}{\widehat{\E}^{\mu}[\alpha_t(\widehat{X}_t)]}
        &x,\hat x\in\R,&
        t\in[0,T].
    \e* 
Then, $\pi \in \Ac(\mu,\pi)$ and
    \begin{align*}
        \alpha_t(x)=\Sigma^0_t(x)=A^{\mu,\pi}_t(x,\pi_t)\;\;\mbox{and}\;\;m^{\mu,\pi}_t(x)=(1+\kappa_t)^{-1},
        ~\mbox{for all}~
        (t,x) \in [0,T] \x \R.
    \end{align*}
\end{proposition}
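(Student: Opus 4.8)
The plan is to verify the three asserted identities by direct substitution of the given formula for $\pi$ into the defining expressions, and then to check the integrability conditions in Definition~\ref{def:admissible} that are required for $\pi \in \Ac(\mu,\pi)$. First I would compute $m^{\mu,\pi}_t(x)$. Since the proposed $\pi_t(x,y)$ does not depend on its second argument $y$, we have $\int_\R \pi_t(\widehat x, x)\mu_t(\drm\widehat x) = \widehat\E^\mu\big[\pi_t(\widehat X_t, x)\big]$, but by the definition $\pi_t(\widehat x,x)$ depends only on $\widehat x$, so this integral equals $\widehat\E^\mu\big[((1+\kappa_t)\alpha_t(\widehat X_t)-\sigma^0_t(\widehat X_t))/\widehat\E^\mu[\alpha_t(\widehat X_t)]\big]$. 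Using the hypothesis $\widehat\E^\mu[\sigma^0_t(\widehat X_t)] = \widehat\E^\mu[\alpha_t(\widehat X_t)]$ from \eqref{eq:condition_sigma0}, the numerator averages to $(1+\kappa_t)\widehat\E^\mu[\alpha_t(\widehat X_t)] - \widehat\E^\mu[\alpha_t(\widehat X_t)] = \kappa_t\,\widehat\E^\mu[\alpha_t(\widehat X_t)]$, hence $1 + \int_\R \pi_t(\widehat x,x)\mu_t(\drm\widehat x) = 1 + \kappa_t$, which is nonzero $\P$--a.s.\ by hypothesis; therefore $m^{\mu,\pi}_t(x) = (1+\kappa_t)^{-1}$ as claimed, in particular independent of $x$ and satisfying condition (1-i) of Definition~\ref{def:admissible}.

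Next I would identify $\Sigma^0_t$. Recall that the coefficient $\Sigma^0$ associated to the environment $\mu$ is characterized, through the equilibrium-type identification with the dynamics \eqref{eq:common_main} with $\beta = \pi$, by $\Sigma^0_t(x) = m^{\mu,\pi}_t(x)\,\overline{\Sigma^0}^{\mu,\pi,\pi}_t(x)$ where, by \eqref{barBSigma}, $\overline{\Sigma^0}^{\mu,\pi,\pi}_t(x) = \sigma^0_t(x) + \int_\R \pi_t(x,\widehat x)\Sigma^0_t(\widehat x)\mu_t(\drm\widehat x)$. Since $\pi_t(x,\widehat x)$ does not depend on $\widehat x$, the integral factors as $\pi_t(x,\cdot)\,\widehat\E^\mu[\Sigma^0_t(\widehat X_t)]$; a fixed-point/consistency argument is needed here, but the cleaner route is to simply \emph{define} $\Sigma^0_t(x) := \alpha_t(x)$ and verify it solves the identification equation. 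Plugging $\pi_t(x,\widehat x) = ((1+\kappa_t)\alpha_t(x)-\sigma^0_t(x))/\widehat\E^\mu[\alpha_t(\widehat X_t)]$ and $\widehat\E^\mu[\Sigma^0_t(\widehat X_t)] = \widehat\E^\mu[\alpha_t(\widehat X_t)]$ into $m^{\mu,\pi}_t(x)\big(\sigma^0_t(x) + \pi_t(x,\cdot)\widehat\E^\mu[\alpha_t(\widehat X_t)]\big)$ gives $(1+\kappa_t)^{-1}\big(\sigma^0_t(x) + (1+\kappa_t)\alpha_t(x) - \sigma^0_t(x)\big) = \alpha_t(x)$, confirming $\Sigma^0_t(x) = \alpha_t(x)$, and simultaneously $A^{\mu,\pi}_t(x,\pi_t) = m^{\mu,\pi}_t(x)\overline{\Sigma^0}^{\mu,\pi,\pi}_t(x) = \alpha_t(x)$.

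Finally I would dispatch the admissibility bookkeeping: conditions \eqref{eq:estimates_common0} and \eqref{eq:estimates_common} (and the well-posedness of the SDE \eqref{eq:common_main} with $\beta = \pi$, equivalently of \eqref{SDEdbX} with $\alpha$) follow from the integrability hypothesis $\E\big[\widehat\E^\mu\int_0^T \alpha_t(\widehat X_t)^2(1+|\lambda_t|^2)\drm t\big]<\infty$ in \eqref{eq:condition_sigma0}, together with the already-assumed square-integrability of $\mu \in \Pc^{\F^0}_2(\Cc)$ and Assumption on $(b,\sigma,\sigma^0)$: indeed $m^{\mu,\pi}$ is $x$-independent and $\kappa$-controlled, $\overline{\Sigma^0}^{\mu,\pi,\pi}=\alpha(1+\kappa)$ up to the factor, and $B = \lambda\Sigma^0 = \lambda\alpha$, so every term appearing in \eqref{eq:estimates_common0}--\eqref{eq:estimates_common} is dominated by a constant multiple of $\alpha_t(\widehat X_t)^2(1+|\lambda_t|^2)$ plus the $\sigma$-term which is handled by linear growth. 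I expect the main (only mildly delicate) obstacle to be making the identification $\Sigma^0 = \alpha$ rigorous rather than circular: one must argue that the coefficients $(B,\Sigma,\Sigma^0)$ attached to $\mu$ via $\Sc_2$ are compatible with — and here pinned down by — the prescribed $\pi$, which is precisely why the statement phrases the conclusion as an identity $\alpha_t(x)=\Sigma^0_t(x)=A^{\mu,\pi}_t(x,\pi_t)$ to be checked rather than derived; the consistency works because $\pi$ is built to depend only on its first argument, which decouples the $\widehat\E^\mu$ average from $x$.
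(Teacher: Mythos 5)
Your proposal is correct and follows essentially the same route as the paper's (very terse) proof: set $\Sigma^0_t:=\alpha_t$, exploit that $\pi_t(x,y)$ depends only on its first argument so that the $\mu_t$--averages over either slot decouple, use $\widehat\E^\mu[\sigma^0_t(\widehat X_t)]=\widehat\E^\mu[\alpha_t(\widehat X_t)]$ to get $m^{\mu,\pi}_t=(1+\kappa_t)^{-1}$ and $A^{\mu,\pi}_t(x,\pi_t)=\alpha_t(x)$, and close the admissibility with the integrability hypothesis in \eqref{eq:condition_sigma0}. You in fact spell out the computations the paper dismisses as ``straightforward,'' including the consistency check behind the identification $\Sigma^0=\alpha$.
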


\begin{proof}
    We just need to check the condition with $\pi$ given in the statement. Using the condition \eqref{eq:condition_sigma0}, with $\Sigma^0_t(x):=\alpha_t(x)$, it is straightforward that $\alpha_t(x)=A^{\mu,\pi}_t(x,\pi_t)\;\;\mbox{and}\;\;m^{\mu,\pi}_t(x)=(1+\kappa_t)^{-1}$. By the integrability condition on $\alpha$ and $\mu$, we check that $\pi \in \Ac(\mu,\pi)$.
\end{proof}

\ep

\section{Black--Scholes idiosyncratic dynamics examples}

In this section, we examine the case where the idiosyncratic risk process $P$ of \eqref{SDE:P} is defined by:
\be\label{BS}
dP_t
&=&
X_t\big[ \sigma^0_t (\lambda_t\drm t + \drm W^0_t)
             +\sigma_t \drm W_t
      \big],
~t\in[0,T],
\ee
where we have already accounted for the restriction on the drift $b=\lambda\sigma^0$, for some $\F^0$--progressively measurable process $\lambda$, due the NIP condition. Here, $\sigma^0$ and $\sigma$ are bounded positive $\F^0$--progressively measurable processes, and we assume in addition that 
\be\label{lambda}
Z_T:=\Ec\Big(\int_0^.\lambda_t\;\drm W^0_t\Big)_T
&\mbox{satisfies}&
\E[Z_T]=1
~\mbox{and}~
\E[Z_T^2]<\infty,
\ee 
with $\Ec(M)_t:=e^{M_t-\frac12\langle M\rangle_t}$ denoting the Dol\'eans--Dade exponential for all semimartingale $M$.
 
Our main results in this section provides a class of equilibrium solutions to the MFG of cross--holding under logarithmic and power utility functions.

Observe that we have deliberately omitted here the possible dependence of the coefficients $\lambda,\sigma,\sigma^0$ on the marginal distribution $\mu_t$. We emphasize that there would be no technical additional difficulties added by including the mean field interaction through such a dependence in the coefficients. However, our choice is motivated by the following two important reasons:
\begin{itemize}
\item first, the presentation is much simpler as we do not have to carry everywhere the dependence of the coefficients on the marginal law;
\item second, and more importantly, the problem reduction of Proposition \ref{prop:equivence_MFG} shows that the cross--holding feature of our problem does not induce an additional mean field interaction at the level of the equilibrium dynamics. This is in contrast with the no--common noise setting of Djete \& Touzi \cite{DjeteTouzi} where the cross--holding equilibrium strategy is the source of a mean field interaction in the equilibrium dynamics. Notice the major difference between our context and \cite{DjeteTouzi} that no portfolio constraints are considered in the present problem as we are focusing on the mean--variance tradeoff only. 
\end{itemize}

\subsection{The logarithmic utility case}

Let us consider the special case
\be\label{log}
U(x):=\log{(x)},
&x>0,&
\mbox{with convention}~U=-\infty~\mbox{on}~(-\infty,0].
\ee

\begin{proposition}\label{prop:log}
Under \eqref{BS}, \eqref{lambda} and \eqref{log}, let $\kappa$ be an arbitrary $\F^0-$progressively measurable process with $1+\kappa_t>0$, and let $X$ be an It\^o process in $\Sc^2$ defined by
\be\label{mu:log}
\frac{\drm X_t}{X_t}
&=&
\sigma^0_t (\lambda_t\drm t + \drm W^0_t)
        +\frac{\sigma_t}{1+\kappa_t} \drm W_t,
~~\P-\mbox{a.s.}
\ee
denote $\mu^\kappa:=\Lc_{X|W^0}$ the $($log--gaussian$)$ conditional law of $X$ given $\F^{W^0}$. Then 

\medskip
{\rm (i)} there exists a solution of the mean field game of cross--holding with equilibrium distribution $\mu^\kappa$ if and only if $\lambda=\sigma^0$; 

\medskip
{\rm (ii)} and in this case any strategy $\pi^\kappa\in\Ac(\mu,\pi^\kappa)$ satisfying 
\be\label{pi:log}
\widehat\E^{\mu^\kappa}\left[\pi^\kappa_t(X_t,\widehat X_t)\widehat X_t \right]=\kappa_tX_t,
&\mbox{and}&
\widehat\E^{\mu^\kappa}\left[\pi^\kappa_t(X_t, \widehat X_t) \right]=\kappa_t,
~~\P-\mbox{a.s.}
\ee
is a corresponding equilibrium cross--holding strategy.
\end{proposition}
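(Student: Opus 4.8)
The plan is to reduce the problem, via Proposition \ref{prop:equivence_MFG}, to a single--agent portfolio optimisation problem with random endowment, and to solve that problem explicitly in the logarithmic case. Reading off \eqref{BS}--\eqref{mu:log}, the process $X$ is an It\^o process with coefficients $B_t(x)=x\sigma^0_t\lambda_t$, $\Sigma_t(x)=x\sigma_t/(1+\kappa_t)$, $\Sigma^0_t(x)=x\sigma^0_t$, while $b_t(x)=x\sigma^0_t\lambda_t$, $\sigma_t(x)=x\sigma_t$ and $\sigma^0_t(x)=x\sigma^0_t$. Since $X_t>0$ a.s.\ we have $\mu^\kappa_t[\Sigma^0_t\ne 0]>0$, so Proposition \ref{prop:equivence_MFG} applies, and admissibility of $(\mu^\kappa,\pi^\kappa)$ is routine given $X\in\Sc_2$, \eqref{lambda} and the assumption $\pi^\kappa\in\Ac(\mu^\kappa,\pi^\kappa)$.

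The first step is to identify the reduced dynamics. Plugging the two relations in \eqref{pi:log} into the definitions of $m^{\mu^\kappa,\pi^\kappa}$ and of $\overline{\Sigma^0}$ in \eqref{barBSigma} gives the algebraic identities $m^{\mu^\kappa,\pi^\kappa}_t(x)=(1+\kappa_t)^{-1}$ and $\overline{\Sigma^0}^{\mu^\kappa,\pi^\kappa,\pi^\kappa}_t(x)=x\sigma^0_t(1+\kappa_t)$, whence
$$
\alpha^{\mu^\kappa,\pi^\kappa}_t(x):=A^{\mu^\kappa,\pi^\kappa}_t(x,\pi^\kappa_t)=x\sigma^0_t,
\qquad
v^{\mu^\kappa,\pi^\kappa}_t(x)=m^{\mu^\kappa,\pi^\kappa}_t(x)\,\sigma_t(x)=\frac{x\sigma_t}{1+\kappa_t},
$$
so that the reduced state equation \eqref{SDEdbX} becomes the \emph{linear} SDE $\drm\X_t=\alpha_t(\X_t)\big(\lambda_t\drm t+\drm W^0_t\big)+\frac{\sigma_t}{1+\kappa_t}\X_t\,\drm W_t$. (For the ``only if'' half of (i) these identities are not even needed: whenever $(\mu^\kappa,\pi)$ is an equilibrium, Definition \ref{def:NE}(ii) directly forces $\alpha^{\mu^\kappa,\pi}_t(x)=\Sigma^0_t(x)=x\sigma^0_t$ and $v^{\mu^\kappa,\pi}_t(x)=\Sigma_t(x)=x\sigma_t/(1+\kappa_t)$.)

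The second step is to solve $\sup_\alpha\J_{\mu^\kappa,\pi^\kappa}(\alpha)=\sup_\alpha\E[\log\X^{\mu^\kappa,\pi^\kappa,\alpha}_T]$. Applying It\^o's formula to $\log\X$ with $\rho_t:=\alpha_t(\X_t)/\X_t$ and using $\rho_t\lambda_t-\tfrac12\rho_t^2\le\tfrac12\lambda_t^2$, with equality iff $\rho_t=\lambda_t$, a localisation argument based on $\E\int_0^T\lambda_t^2\,\drm t<\infty$ (a consequence of \eqref{lambda}) and on the admissibility bounds \eqref{alpha} gives, for every admissible $\alpha$,
$$
\E\big[\log\X^{\mu^\kappa,\pi^\kappa,\alpha}_T\big]\ \le\ \E[\log\X_0]+\tfrac12\,\E\!\int_0^T\!\!\Big(\lambda_t^2-\tfrac{\sigma_t^2}{(1+\kappa_t)^2}\Big)\drm t,
$$
with equality if and only if $\alpha_t(\X_t)=\lambda_t\X_t$, $\drm t\otimes\drm\P$--a.e.; thus the reduced problem has the unique optimiser $\alpha^\star_t(x)=\lambda_t x$, with finite value (using also $\E[\log\X_0]\le\E[\X_0]<\infty$). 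By Proposition \ref{prop:equivence_MFG}, $(\mu^\kappa,\pi^\kappa)$ is an equilibrium if and only if $\alpha^{\mu^\kappa,\pi^\kappa}$ is the reduced optimiser and $\Lc\big(\X^{\mu^\kappa,\pi^\kappa,\alpha^{\mu^\kappa,\pi^\kappa}}\,|\,\Fc^0_T\big)=\mu^\kappa$. For the necessity in (i): if $(\mu^\kappa,\pi)$ is an equilibrium then $\alpha^{\mu^\kappa,\pi}$ is optimal, so by uniqueness $\alpha^{\mu^\kappa,\pi}=\alpha^\star$, i.e.\ $x\sigma^0_t=\lambda_t x$ along the a.s.\ positive equilibrium trajectory, hence $\lambda=\sigma^0$. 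For the sufficiency and assertion (ii): if $\lambda=\sigma^0$ then $\alpha^{\mu^\kappa,\pi^\kappa}=\alpha^\star$ is optimal, and with these coefficients the reduced equation \eqref{SDEdbX} driven by $\alpha^{\mu^\kappa,\pi^\kappa}$ is exactly \eqref{mu:log}; by strong uniqueness of that linear SDE (with common initial datum $X_0$) one gets $\X^{\mu^\kappa,\pi^\kappa,\alpha^{\mu^\kappa,\pi^\kappa}}=X$, so $\Lc\big(\X^{\mu^\kappa,\pi^\kappa,\alpha^{\mu^\kappa,\pi^\kappa}}\,|\,\Fc^0_T\big)=\mu^\kappa$ and $\J_{\mu^\kappa,\pi^\kappa}(\alpha^{\mu^\kappa,\pi^\kappa})=\E[\log X_T]<\infty$; Proposition \ref{prop:equivence_MFG} then shows that $(\mu^\kappa,\pi^\kappa)$ is an equilibrium.

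The hardest point is the rigorous treatment of the logarithmic optimisation: one must justify that the stochastic integrals produced by It\^o's formula applied to $\log\X$ have non--positive expectation (equivalently, that $t\mapsto\log\X_t-\tfrac12\int_0^t(\lambda_s^2-\tfrac{\sigma_s^2}{(1+\kappa_s)^2})\drm s$ is a supermartingale, and a martingale for $\alpha^\star$) under the sole integrability hypotheses \eqref{lambda} and \eqref{alpha}, without assuming boundedness of $\lambda$ or a uniform lower bound on $1+\kappa$; this calls for a careful localisation, and one must also verify that the pointwise maximiser $\alpha^\star$ is attained as an admissible control, since it is precisely the \emph{uniqueness} of the optimiser that yields the necessity $\lambda=\sigma^0$. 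A secondary, purely bookkeeping, point is to keep track of which argument of $\pi^\kappa$ is integrated against $\mu^\kappa_t$ in $m^{\mu^\kappa,\pi^\kappa}_t$ as opposed to in $\overline{\Sigma^0}^{\mu^\kappa,\pi^\kappa,\pi^\kappa}_t$ when extracting the above identities from \eqref{pi:log}.
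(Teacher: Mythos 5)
Your overall route is the same as the paper's: reduce to the single--agent problem via Proposition \ref{prop:equivence_MFG}, read off $m^{\mu^\kappa,\pi^\kappa}_t=(1+\kappa_t)^{-1}$ and $A^{\mu^\kappa,\pi^\kappa}_t(x,\pi^\kappa_t)=x\sigma^0_t$ from \eqref{pi:log}, solve the logarithmic problem to find the optimal instantaneous return $\lambda_t$, and match with \eqref{mu:log} to force $\lambda=\sigma^0$. The computations you give (including the identification of $\overline{\Sigma^0}^{\mu^\kappa,\pi^\kappa,\pi^\kappa}_t(x)=x\sigma^0_t(1+\kappa_t)$ and the pointwise bound $\rho\lambda-\tfrac12\rho^2\le\tfrac12\lambda^2$) are correct and coincide with the paper's Steps 2--3.

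There is, however, one genuine gap: you apply It\^o's formula to $\log\X^{\mu^\kappa,\pi^\kappa,\alpha}$ and set $\rho_t:=\alpha_t(\X_t)/\X_t$ for an \emph{arbitrary} admissible $\alpha$, which presupposes that $\X^{\mu^\kappa,\pi^\kappa,\alpha}$ stays strictly positive. This is not automatic: the reduced dynamics \eqref{SDEdbX} is not of geometric type for a general $\alpha$ (only the endowment term $v^{\mu,\pi}_t(x)=x\sigma_t/(1+\kappa_t)$ is proportional to $x$), so $\X^\alpha$ may reach zero on $[0,T)$ and still satisfy $\X^\alpha_T>0$, in which case your upper bound on $\E[\log\X^\alpha_T]$ --- and hence the uniqueness of the optimiser on which the necessity of $\lambda=\sigma^0$ rests --- is not justified. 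This is precisely what the paper's Step~1 supplies, and it is the place where hypothesis \eqref{lambda} is actually used: under $\Q:=Z_T\cdot\P$ the process $\X^\alpha$ is a local martingale, the $\L^2$ bounds make it a true $\Q$--martingale, so $\X^\alpha_t=\E^\Q[\X^\alpha_T|\Fc_t]\ge 0$ once $\X^\alpha_T\ge 0$; and on the event $\{\E^\Q[\X^\alpha_T|\Fc_t]=0\}$ one has $\X^\alpha_T=0$, whence $\E[U(\X^\alpha_T)]=-\infty$, so all non--trivially--suboptimal controls yield a strictly positive state. Your ``hardest point'' paragraph flags the localisation and attainment issues (which are real but routine) yet misses this positivity step; you should insert the measure--change argument before the logarithmic It\^o computation. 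The argument--ordering issue in \eqref{pi:log} that you note at the end is an ambiguity already present in the paper's own notation and I would not count it against you.
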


\proof We use the equivalence in \Cref{prop:equivence_MFG} and argue in three steps.

\medskip
{\bf 1.} Given that the log--utility function is negative infinite on $(-\infty,0]$, we may restrict the representative agent utility maximization step in the MFG definition to those strategies which induce strictly positive equity value at the final time $T$. Now, as $\E[Z_T]=1$ by \eqref{lambda}, the equity process $X$ is a local martingale under the equivalent probability measure $\Q:=Z_T\cdot\P$. Moreover $\E^\Q[\sup_{t\le T}|X_t|]\le\|Z\|_{\L^2} \big\|\sup_{t\le T}|X_t|\big\|_{\L^2}<\infty$ by the second condition in \eqref{lambda} and the admissibility condition of any MFG solutions. It follows that the equity process $X$ is a $\Q-$martingale, and consequently 
\be\label{X>0}
    X_t=\E^\Q[X_T|\Fc_t] \ge 0
    &\mbox{for all}&
    t\in[0,T],~\P\mbox{--a.s.}
\ee
If $\P \left[  \E^\Q[X_T|\Fc_t] =0 \right] >0$, we have $X_T=0$ on $\{ \E^\Q[X_T|\Fc_t] =0\}$. Since $\sup_{t\le T}|X_t| < \infty$ a.e., $U(X_T) < \infty$ a.e., this leads to  
$$
    U(X_T)= U(X_T)\1_{\{\E^\Q[X_T|\Fc_t] =0\}} + U(X_T)\1_{\Om \setminus \{\E^\Q[X_T|\Fc_t] =0\} }=-\infty.
$$ 
Therefore, any MFG solutions must verify: for all $t \in [0,T]$, $X_t >0$, $\P$--a.e. and we may restrict the expected utility maximization to the strategies verifying this condition.

\medskip
\noindent {\bf 2.} Next, identifying the $\drm W_t-$coefficient of SDE \eqref{mu:log} with that in the dynamics of the equilibrium equity process \eqref{X:alpha}, we see that whenever $(\mu,\pi)$ is an equilibrium solution of the MFG, we have 
$$
m^{\mu,\pi}_t(X^{\mu,\pi,\pi}_t)
=
\frac{1}{1+\kappa_t},
~\P-\mbox{a.s. and then}~
m^{\mu,\pi}_t(x)
=
\frac{1}{1+\kappa_t},~\P-\mbox{a.s. for a.e.}~x>0.
$$
This already shows that the second equation in \eqref{pi:log} holds true. On the other hand, together with \eqref{X>0}, this allows allows to rewrite the SDE of $\X^\alpha$ as 
\begin{equation}\label{SDEgeom}
\frac{\drm\X^\alpha_t}{\X^\alpha_t}
=
a_t(\X^\alpha_t)(\lambda_t\drm t+\drm W^0_t)
+v_t\drm W_t,
~\mbox{with}~
\alpha_t(\X^\alpha_t)
=\X^\alpha_ta_t(\X^\alpha_t),
~v_t:=\frac{\sigma_t}{1+\kappa_t}.
\end{equation}
{\bf 3.} Next, given our admissibility conditions on $\alpha$, we compute for the present exponential utility context \eqref{log} that
$$
\J_{\mu,\pi}(\alpha)
=
\log{(X_0)}
+\E\Big[\int_0^T \Big(a_t(\X^\alpha_t)\lambda_t
                                  -\frac12a_t(\X^\alpha_t)^2
                                  -\frac12v_t^2
                          \Big)\drm t
      \Big],
$$
which obviously achieves a maximum at $\hat a_t(\X^\alpha_t)=\lambda_t$. Then,
\b*
\frac{\drm\X^{\hat\alpha}_t}{\X^{\hat\alpha}_t}
=
\lambda_t(\lambda_t\drm t+\drm W^0_t)
+v_t\drm W_t,
\e*
which compared with \eqref{mu:log} implies that $\lambda=\sigma^0$ is necessary for the dynamics of $\X$ to qualify for an equilibrium solution of the mean field game. Finally, identifying again the last equilibrium dynamics with \eqref{X:alpha}, we see that:
$$
\sigma^0_tX_t
=
\frac{X_t+\widehat\E^{\mu^\kappa}\left[\pi_t(X_t,\widehat X_t)\widehat X_t\right]}
       {1+\kappa_t},
$$
which provides the first equation of \eqref{pi:log}.
\ep

\subsection{The power utility case}

Similar to the previous section, we consider the case of power utility for the representative agent
\be\label{power}
U(x):=\frac{x^p}{p},
&x>0,&
\mbox{with convention}~U=-\infty~\mbox{on}~(-\infty,0].
\ee

\begin{proposition}
Under \eqref{BS}, \eqref{lambda} and \eqref{power}, let $\kappa$ be an arbitrary $\F^0-$progressively measurable process with $1+\kappa_t>0$, and 
\be\label{QBSDE}
\E\Big[e^{\frac{2-p}{1-p}|\xi^\kappa|}
   \Big]<\infty
&\mbox{with}&
\xi^\kappa:=p\int_0^T\Big(\frac{\lambda_r^2}{1-p}
                                -\frac{(1-p)\sigma_r^2}{(1+\kappa_r)^2}
                                \Big) \mathrm{d}r.
\ee
Let $X$ be an It\^o process in $\Sc^2$ defined by
\be\label{mu:power}
\frac{\drm X_t}{X_t}
&=&
\sigma^0_t (\lambda_t\drm t + \drm W^0_t)
        +\frac{\sigma_t}{1+\kappa_t} \drm W_t,
~~\P-\mbox{a.s.}
\ee
and denote $\mu^\kappa:=\Lc_{X|W^0}$ the (gaussian) conditional law of $X$ given $\F^{W^0}$. Then

\medskip
{\rm (i)} there exists a solution of the mean field game of cross-holding with equilibrium distribution $\mu^\kappa$ if and only if 
$
(1-p)\sigma^0
=
\lambda+Z^\kappa,
$ where $(Y^\kappa,Z^\kappa)$ is the unique solution of the quadratic backward SDE 
\be\label{QBSDE}
Y^\kappa_t
=
\frac12\int_t^T \Big((Z^\kappa_r)^2
                                +\frac{p(\lambda_r+Z_r^\kappa)^2}{1-p}
                                -\frac{p(1-p)\sigma_r^2}{(1+\kappa_r)^2}
                                \Big)\mathrm{d}r
 -\int_t^TZ^\kappa_r \mathrm{d}W^0_r,
 ~t\le T;
\ee

\medskip
{\rm (ii)} In this case any strategy $\pi^\kappa\in\Ac(\mu,\pi^\kappa)$ satisfying 
\be\label{pi:power}
\widehat\E^{\mu^\kappa}\left[\pi^\kappa_t(X_t,\widehat X_t)\widehat X_t \right]=\kappa_tX_t,
&\mbox{and}&
\widehat\E^{\mu^\kappa}\left[\pi^\kappa_t(X_t, \widehat X_t) \right]=\kappa_t,
~~\P-\mbox{a.s.}
\ee
is a corresponding equilibrium cross--holding strategy.
\end{proposition}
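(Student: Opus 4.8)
The plan is to reproduce the three--step scheme of the proof of \Cref{prop:log}, replacing the explicit computation of the expected logarithmic utility by a stochastic control argument for power utility, which is where the quadratic backward SDE \eqref{QBSDE} enters. Throughout I use the equivalence of \Cref{prop:equivence_MFG}, so that it suffices to analyze the reduced problem $\sup_\alpha\J_{\mu,\pi}(\alpha)$ over controls satisfying \eqref{alpha} and then to exhibit a cross--holding strategy $\pi^\kappa$ reproducing the candidate conditional law $\mu^\kappa$. As in Step~1 of the logarithmic case, since $U=-\infty$ on $(-\infty,0]$, condition \eqref{lambda} makes $X$ a true martingale under $\Q:=Z_T\cdot\P$ (using $\E[Z_T]=1$, $\E[Z_T^2]<\infty$ and $\E[\sup_{t\le T}|X_t|^2]<\infty$), hence $X_t=\E^\Q[X_T|\Fc_t]\ge 0$; and if $X_t$ vanished on a set of positive probability then $X_T=0$ there and $U(X_T)=-\infty$, which rules out optimality of the corresponding strategy. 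Thus any equilibrium must have $X_t>0$, $\P$--a.s., and the maximization may be restricted to strategies keeping the equity strictly positive.

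For Step~2, identifying the $\drm W_t$--coefficient in \eqref{mu:power} with that of the equilibrium dynamics \eqref{X:alpha} forces $m^{\mu,\pi}_t(x)=(1+\kappa_t)^{-1}$ for a.e.\ $x>0$, which pins down the second identity in \eqref{pi:power} and reduces the controlled reduced state to the geometric form $\drm\X^\alpha_t/\X^\alpha_t=a_t(\X^\alpha_t)(\lambda_t\drm t+\drm W^0_t)+v_t\drm W_t$, with $v_t:=\sigma_t/(1+\kappa_t)$ and $\alpha_t(x)=xa_t(x)$. For Step~3, I would solve the resulting power--utility portfolio problem by the martingale optimality principle: look for the value process in the multiplicative form $\tfrac1p\X_t^p e^{Y^\kappa_t}$, with $(Y^\kappa,Z^\kappa)$ an $\F^0$--adapted pair solving a backward SDE with terminal value $Y^\kappa_T=0$; applying It\^o's formula to $\X_t^p e^{Y^\kappa_t}$ and imposing that its drift be nonpositive for every admissible $\alpha$ and vanish at the optimum, the pointwise maximization over the fraction $a$ is an elementary concave quadratic problem (for $0<p<1$), with maximizer $\hat a_t=(\lambda_t+Z^\kappa_t)/(1-p)$, and plugging this maximizer back, after substituting $v_t=\sigma_t/(1+\kappa_t)$, one recognizes precisely the generator of \eqref{QBSDE}. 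Consequently the optimal reduced wealth satisfies $\drm\X^{\hat\alpha}_t/\X^{\hat\alpha}_t=\tfrac{\lambda_t+Z^\kappa_t}{1-p}(\lambda_t\drm t+\drm W^0_t)+v_t\drm W_t$, and comparing its $\drm W^0_t$--coefficient with \eqref{mu:power} shows that $\mu^\kappa$ can arise as an equilibrium conditional law if and only if $(1-p)\sigma^0=\lambda+Z^\kappa$; this yields both implications of item~(i).

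For item~(ii), assume the condition of~(i), so that $\hat\alpha_t(x)=x(\lambda_t+Z^\kappa_t)/(1-p)=\sigma^0_t x$. Identifying the equilibrium dynamics \eqref{X:alpha} with $\beta=\pi$ and \eqref{mu:power}, using $\Sigma^0_t(x)=\sigma^0_t x$ and $m^{\mu,\pi}_t=(1+\kappa_t)^{-1}$, the matching of the $\drm W^0_t$--coefficient yields $\sigma^0_t X_t=(1+\kappa_t)^{-1}\big(\sigma^0_t X_t+\sigma^0_t\widehat\E^{\mu^\kappa}[\pi^\kappa_t(X_t,\widehat X_t)\widehat X_t]\big)$, i.e.\ the first identity in \eqref{pi:power}. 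Hence, for any admissible $\pi^\kappa\in\Ac(\mu^\kappa,\pi^\kappa)$ satisfying the two moment conditions of \eqref{pi:power}, one has $\alpha^{\mu,\pi}_t(x)=A^{\mu,\pi}_t(x,\pi^\kappa_t)=\sigma^0_t x=\hat\alpha_t(x)$ and $\Lc(\X^{\mu,\pi,\alpha^{\mu,\pi}}|\Fc^0_T)=\mu^\kappa$, so by \Cref{prop:equivence_MFG} the pair $(\mu^\kappa,\pi^\kappa)$ is an equilibrium; the existence of such a $\pi^\kappa$ follows from \Cref{prop:find_pi} applied with $\alpha_t(x)=\sigma^0_t x$ and the given $\kappa$.

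The main obstacle is the analytic content of Step~3: establishing that the quadratic backward SDE \eqref{QBSDE}, whose generator is quadratic in the $Z$--variable and whose coefficients are unbounded (only $\E[Z_T^2]<\infty$ is assumed for the Dol\'eans--Dade exponential $Z$ of \eqref{lambda}, and $1+\kappa$ need not be bounded), is well posed under the exponential integrability hypothesis $\E[e^{\frac{2-p}{1-p}|\xi^\kappa|}]<\infty$. The natural route is the exponential change of variable $\exp(\tfrac{1}{1-p}Y^\kappa_t)$, which linearizes \eqref{QBSDE} into a linear backward SDE solved by a Girsanov change of measure; the exponential moment of $\xi^\kappa$, together with $\E[Z_T^2]<\infty$ and H\"older's inequality, is exactly what makes the resulting conditional expectation finite and integrable enough to yield existence and uniqueness of $(Y^\kappa,Z^\kappa)$ and the admissibility of $\hat\alpha$ in the sense of \eqref{alpha} (recalling that $X\in\Sc^2$ already controls the $X$--dependent factors). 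The remaining, more routine, point is to upgrade the martingale--optimality heuristic to a genuine verification, i.e.\ to check that $\X_t^p e^{Y^\kappa_t}$ is a true supermartingale for every admissible $\alpha$ and a true martingale for $\hat\alpha$, which again rests on the stated integrability assumptions.
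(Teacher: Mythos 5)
Your proposal reproduces the paper's argument essentially step for step: the same positivity and $m^{\mu,\pi}_t=(1+\kappa_t)^{-1}$ reductions as in the logarithmic case, the same multiplicative value--process ansatz $\frac1p\X_t^{p}e^{Y^\kappa_t}$ with the supermartingale/martingale verification, the same optimizer $\hat a_t=(\lambda_t+Z^\kappa_t)/(1-p)$ whose plug--in recovers the generator of the quadratic BSDE, the same identification of the $\drm W^0$--coefficient yielding $(1-p)\sigma^0=\lambda+Z^\kappa$, and the same derivation of \eqref{pi:power} together with Proposition \ref{prop:find_pi} for item (ii). The only divergence is in how well--posedness of the quadratic BSDE is obtained: you propose the exponential linearization $e^{Y^\kappa/(1-p)}$ combined with a Girsanov change of measure, whereas the paper invokes Briand and Hu for existence and extracts uniqueness as a by--product of the very verification argument you describe; both routes are standard, and the exponential--moment hypothesis on $\xi^\kappa$ is precisely what makes either one work.
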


\proof We first follow exactly the same line of argument as in the proof of Proposition \ref{prop:log} to rewrite the reduced state process $\X$ in the geometric form \eqref{SDEgeom}. 

We next focus on the representative agent optimization step of the MFG problem. By Briand \& Hu \cite{briand2008quadratic}, the quadratic BSDE \eqref{QBSDE} has a solution $(Y^\kappa,Z^\kappa)$ with
\be\label{Zkappa}
\E\Big[\int_0^T (Z^\kappa_t)^2\mathrm{d}t\Big]
&<&
\infty.
\ee
The uniqueness of such a solution is a consequence of the following argument.

Denote by $H$ the generator of the quadratic backward SDE \eqref{QBSDE}, and set $v:=\frac{\sigma}{1+\kappa}$. Then, for all $\alpha$ satisfying the admissibility condition \eqref{alpha},  it follows from It\^o's formula that the dynamics of process $V^\alpha_t:=U(\X^\alpha_t)e^{Y^\kappa_t}$ are given by:  
\b*
\mathrm{d}V^\alpha_t
&\!\!\!=&\!\!\!
e^{Y^\kappa_t}\big[(\X^\alpha_t)^{p-1}\mathrm{d}\X^\alpha_t
                  +\frac12(p-1)(\X^\alpha_t)^{p-2}\mathrm{d}\langle \X^\alpha\rangle_t
            \big]
\\
&\!\!\!&\!\!\!
+\frac1p(\X^\alpha_t)^pe^{Y^\kappa_t}
   \big[\mathrm{d}Y^\kappa_t+\frac12\mathrm{d}\langle Y^\alpha\rangle_t
   \big]
+(\X^\alpha_t)^{p-1}e^{Y^\kappa_t}\mathrm{d}\langle \X^\alpha,Y^\kappa\rangle_t
\\
&\!\!\!=&\!\!\!
e^{Y^\kappa_t}\big[(\X^\alpha_t)^{p}(a_t(\lambda_t\mathrm{d}t+\mathrm{d}W^0_t)
                                                           +v_t\mathrm{d}W_t
                                                          )
                  +\frac12(p-1)(\X^\alpha_t)^{p}(a_t^2+v_t^2)\mathrm{d}t
            \big]
\\
&\!\!\!&\!\!\!
+\frac1p(\X^\alpha_t)^pe^{Y^\kappa_t}
   \big[-H_t\mathrm{d}t+Z^\kappa_t\mathrm{d}W^0_t
          +\frac12(Z^\kappa_t)^2\mathrm{d}t
   \big]
+(\X^\alpha_t)^{p}e^{Y^\kappa_t} a_tZ^\kappa_t \mathrm{d}t
\\
&\!\!\!=&\!\!\!
V_t^\alpha
\Big[ \Big(\!-\!H_t
                +pa_t\lambda_t+\frac12p(p-1)(a_t^2+v_t^2)
                +\frac12(Z^\kappa_t)^2
                +pa_tZ_t^\kappa
        \Big)\mathrm{d}t
+(pa_t+Z^\kappa_t)\mathrm{d}W^0_t
\Big].
\e*
By the definition of the generator $H$, we see that 
\begin{itemize}
\item for any control process $\alpha$, the drift in the previous dynamics is nonpositive, so that $V_t^\alpha$ is a nonnegative local supermartingale and therefore a supermartingale, implying that 
$$
U(\X_0)e^{Y_0^\kappa}
= 
V_0
\ge
\E \big[V^\alpha_T\big]
=
\E \big[U(\X^\alpha_T)\big],
~\mbox{for all}~
\alpha;
$$
\item while for the optimal choice of $\hat a:=\frac{\lambda_t+Z_t^\kappa}{1-p}$, the drift in the previous dynamics is zero, so that $V_t^\alpha$ is a local martingale, and a martingale due to the integrability condition \eqref{Zkappa} on $Z^\kappa$ and the expression of $\hat a$, implying that
$$
V_0
=
\E \big[V^{\hat\alpha}_T\big]
=
\E \big[U(\X^{\hat\alpha}_T)\big],
~\mbox{with}~
\hat\alpha_t(x):=\hat a_tx.
$$
\end{itemize}
This shows on one hand that $V_0=\sup_\alpha J_{\mu,\pi}(\alpha)$, and in fact the same argument started at the time origin $t$ induces also a unique representation of $V_t$ in terms of the similar control problem started from time $t$. Hence the uniqueness of $Y^\kappa$, and the uniqueness of $Z^\kappa_t=\frac1{\drm t}\langle Y^\kappa,W^0\rangle_t$, $\P-$a.s. follows immediately. 

The last argument also provides as a by-product the optimality of $\hat\alpha$, and we may now write the dynamics of the state under this optimal policy:
\b*
\frac{\drm\X^{\hat\alpha}_t}{\X^{\hat\alpha}_t}
&=&
\frac{\lambda_t+Z_t^\kappa}{1-p}(\lambda_t\drm t+\drm W^0_t)
+ \frac{\sigma_t}{1+\kappa_t}\drm W_t,
\e*
which is consistent with \eqref{mu:power} if and only if $
(1-p)\sigma^0
=
\lambda+Z^\kappa
$. This completes the proof of (i), and (ii) follows exactly the same line of argument as in the proof of Proposition \ref{prop:log}.
\ep

\quad

\begin{remark}
{\rm (i)} In light of the two previous examples, by using {\rm \Cref{prop:find_pi}}, we can see that a cross--holding strategy is given by 
\b*
\pi_t(x,\widehat{x})=\frac{\kappa_t x}{\E^{\mu^\kappa}[\widehat{X}_t ]},
&\mbox{for all}&
x,\hat x\in\R.
\e*
{\rm (ii)} However, we may search for solutions of the form $\pi_t(x,\widehat{x})=\sum_{i=1}^Nf^{(i)}_t(x)\hat f^{(i)}_t(\widehat{x})$ for some $N\in\N\cup\{\infty\}$. Let us develop here the calculations for the case $N=2$:
\b*
\pi_t(x,\hat x)
=
f_t(x)\hat f_t(\hat x)+g_t(x)\hat g_t(\hat x),
&\mbox{for all}&
x,\hat x\in\R.
\e*
with the normalization
\b*
\E^{\mu^\kappa}[f_t(\widehat{X}_t)]
&=&
\E^{\mu^\kappa}[g_t(\widehat{X}_t)]=1.
\e*
In this case, the second equation of \eqref{pi:log} says that $\hat f_t(\hat x)+\hat g_t(\hat x)
=
\kappa_t$, which induces by injecting in the first equation that
\b*
(f_t-g_t)(x)\E^{\mu^\kappa}[\hat f_t(\widehat{X}_t)\widehat{X}_t]
&=&
\kappa_t\big(x-g(x)\E^{\mu^\kappa}[\widehat{X}_t]\big).
\e*
Consequently, we obtain a class of equilibrium cross-holding strategies parameterized by arbitrary random maps $\hat f$ and $g$ with $\E^{\mu^\kappa}[g(\widehat{X}_t)]=1$:
\be
\pi_t(x,\hat x)
&=&
f_t(x)\hat f_t(\hat x)+g_t(x)[\kappa_t-\hat f_t(\hat x)]
\nonumber\\
&=&
\kappa_tg_t(x)+(f_t-g_t)(x)\hat f_t(\hat x)
\nonumber\\
&=&
\kappa_t\Big[g_t(x)
                     + \big(x-g_t(x)\E^{\mu^\kappa}[\widehat{X}_t]\big)
                        \frac{\hat f_t(\hat x)}{\E^{\mu^\kappa}[\hat f_t(\widehat{X}_t)\widehat{X}_t]}
              \Big].
\label{pi:class2}
\ee
\end{remark}

\begin{remark}$($Interpretation of the optimal strategy$)$ 

$(i)$ The item $(i)$ of the previous remark shows that an optimal strategy consists to buy or sell a number which is proportional to the ratio of the current position of the player by the mean/average of the positions of the population/other players. Thus, each player has an idea of his proximity to the optimal by comparing his position to the average position of the others.

\medskip
$(ii)$ We can observe that, in the case where $\kappa$ is positive, following the best strategy makes it possible to reduce volatility. Indeed, in the case where the provision is given by \eqref{BS}, doing nothing generates $X_t\left[ \sigma^0_t\drm W^0_t+\sigma_t \drm W_t \right]$ in the diffusion part, while that by following the optimal strategy, we get $X_t\left[ \sigma^0_t\drm W^0_t+\frac{\sigma_t}{1+\kappa_t} \drm W_t \right]$ for the diffusion part. The diffusion coefficient of the idiosyncratic noise is divided by $(1+ \kappa_t)$. 

\medskip
$(iii)$ Notice that, by choosing $\kappa =0$, we obtain an optimal strategy by taking the identical nul control i.e. $\pi_t(x,\widehat x)=0$. In other words, we can reach the optimal by doing nothing i.e. no buy no sell. From the initial formulation of our MFG formulation or from the $n$--player game formulation, this conclusion seems hard to observe at first sight.
    
\end{remark}

\begin{appendix}

\section{Appendix: proofs in the one--period MFG} \label{preuves_section2}

\noindent {\bf Proof of Lemma \ref{lemme_fermeture}}
First, notice that  $F^{\mu}=0,$ a.s implies $G^\beta_{\mu}=0$, a.s for all $\beta \in \Ac_1(\F^\mu,\pi)$, then $K=\L^-_1(\mu_0 \otimes \rho)$ is a closed subset.  Next,  let $F^\mu \in \L_1(\mu_0 \otimes \rho)$ such that  
\begin{equation}
     (\mu_0 \otimes \rho)[F^\mu \ne 0] >0.
    \label{condition F_mu}
\end{equation}

\noindent Let $(\xi_n=\xi_n(X_0,\varepsilon^0))_{n \in \N}$ be some sequence in $K$ converging to some $\xi=\xi(X_0,\varepsilon^0)$, i.e 
\be\label{limite}\label{inegalite_suite}
\xi_n \leq G_{ \mu}^{\beta_n},~\mbox{a.s for some}~
\beta_n \in \Ac_1(F^{\mu},\pi),
~\mbox{and}~
\xi_n 
&\longrightarrow& 
\xi
~\mbox{in}~\L^1,
\ee
and let us show that $\xi \leq G_{ \mu}^\beta$, $\mu_0 \otimes \rho-\mbox{a.s}$, for some $ \beta \in \Ac_1(F^{\mu},\pi)$. We argue separately the two following alternative cases.
\begin{enumerate}
\item If $\liminf_{n} \|\beta_{n}\|_{\L^\infty}<\infty$, then by Mazur's Lemma, see e.g. Theorem III.7 in Brezis \cite{brezis}, we may find $\widehat\beta_n\in\mbox{Conv}(\beta_k,k\ge n)$ such that $\widehat\beta_n\longrightarrow\beta$ in $\L^\infty$. In particular,
$$
\int_{\R} \!\!\big|(\widehat\beta_n-\beta)(x,\widehat x)
              F^\mu(\widehat x,y)\big|\mu(\mathrm{d}x)\mu(\mathrm{d}\widehat x)\rho(\mathrm{d}y)
\le
\|\widehat\beta_n-\beta\|_{\L^\infty}\|F^\mu\|_{\L^1}
\to
0,~\mbox{as}~n\to\infty,
$$
so that $G_\mu^{\hat\beta^n}\longrightarrow G_\mu^\beta$ in $\L^1$. Then, taking the appropriate convex combinations on both sides of the first inequality of \eqref{limite} and passing to the limit $n\to\infty$ then induces $\xi\le G_\mu^\beta$, as required.

\item In the alternative case $\lim_{n} \|\beta_{n}\|_{\L^\infty}=\infty$, we introduce the sequence $\gamma_n:=\frac12\frac{\beta_n}
                                                       {\|\beta_n\|_{\L^\infty}} 
                                      \1_{\{\| \beta_n\|_{ \L^\infty}\ne 0 \}}$, $n \ge 1,$ which satisfies $\sup_n \|\gamma_n\|_{\L^\infty} \leq \frac{1}{2}$. Similar to the previous step, we may find a sequence $\hat\gamma_n\in\mbox{Conv}(\gamma_k,k\ge n)$ such that $\hat\gamma_n\longrightarrow\gamma$ in $\L^\infty$, with $\|\gamma\|_{\L^\infty}\le \frac12$, and $G_\mu^{\hat\gamma_n}\longrightarrow G_\mu^{\gamma}$ in $\L^1$. Dividing both sides of \eqref{limite} by $2\|\beta_n\|_{\L^\infty}$ and taking the appropriate convex combinations on both sides of the first inequality, we obtain by sending $n\to\infty$ that $0\le G_\mu^\gamma$, a.s. due to the fact that $\|\beta_n\|_{\L^\infty}\longrightarrow\infty$. By the no--arbitrage condition, this implies that $0=G_\mu^\gamma=\int_{\R} \gamma(x,\widehat x)F^\mu(\widehat x,\eps^0)\mu_0(\mathrm{d}\widehat x)$, a.s. Now, since $\|\gamma\|_{\L^\infty}\le \frac12$, this Fredholm integral equation has a unique solution $F^\mu(X_0,\eps^0)=0,$  a.s (see T. Kato \cite{kato_1980} page 153 by considering the operator $T_\gamma: f \mapsto \int_{\R} \gamma(x,\hat x)f(\hat x,\varepsilon^0)\mu_0(\mathrm{d} \hat x)$). This contradicts  condition \eqref{condition F_mu}.
\end{enumerate}
\ep

\medskip
\noindent {\bf Proof of Theorem \ref{Theorem_eq_Nash1} }\label{preuve_theorem}
The proof is organized in two steps. 

\medskip
\noindent {\it Step 1: } We first show that, under the uniqueness of the Fredholm equation \eqref{eq_Nash1}, the random environment~$(F^{\mu}, \pi) \in \L^2(\mu_0 \otimes \rho) \times  \in \L^2(\mu_0 \otimes \mu_0)$  is solution to the MFG problem if and only if the following system is satisfied
\begin{equation}
\begin{aligned}
 F^{\mu}(X_0,\varepsilon^0)&= F^\mu_0(X_0)+F_1^\mu(X_0) \varepsilon^0, \, \, \mu_0 \otimes \rho\mbox{--a.s},  \\
  q F^\mu_0(\widehat X_0)&= F_1^\mu(X_0) F_1^\mu(\widehat X_0) \, \, \mu_0 \otimes \mu_0\mbox{--a.s},
\end{aligned}
\label{eq_Nash}
\end{equation} where $F^\mu_0(X_0)=\E^\rho\big[ F^{\mu}(X_0, \varepsilon^0)\big]$ and $F_1^\mu \in \L^2(\mu_0)$ satisfy respectively  
\begin{eqnarray}
    \frac{F^\mu_0(X_0)}{m^{\pi}(X_0)}&=&b(X_0)+\widehat \E^{\mu_0}\big[ \pi(X_0,\widehat X_0) F^\mu_0(\widehat X_0)\big], \mu_0\mbox{--a.s}.\label{F0}\\
    \frac{F_1^\mu(X_0)}{m^{\pi}(X_0)}&=&\sigma(X_0)+\widehat \E^{\mu_0}\big[ \pi(X_0,\widehat X_0) F_1^\mu(\widehat X_0)\big], \mu_0\mbox{--a.s}. 
    \label{Fredholm_G}
\end{eqnarray}

\medskip
While the first equation in \eqref{eq_Nash} was already obtained in \eqref{FG}, the second one is equivalent to the individual optimization step (i) of Definition \ref{def:MFG1}. To prove it, we first notice that $\frac{1}{m^{\pi}(X_0)} \ne 0$, $\mu_0\mbox{--a.s}$. 
Indeed, suppose on the contrary that $\frac{1}{m^{\pi}}=0$ on some subset $A \subset \supp(\mu_0)$ with $ \mu_0(A)>0$, then the mean field equation \eqref{1periodMF1} on $A$ is reduced to
\begin{equation*} 
\int_\R \pi(x_0, y) F^\mu(y,\varepsilon^0)\mu_0(\mathrm{d}y) +b(x_0)+\sigma^0(x_0)\varepsilon^0+\sigma(x_0) \varepsilon=0, \,\rho\mbox{--a.s}\, \mbox{ for all } \, x_0 \in A. \label{m_0}
\end{equation*}
Since $\varepsilon$ is centered and independent of $(X_0,\eps^0)$, this is in contradiction with our non--degeneracy assumption $\sigma(X_0)\ne 0$, $ \mu_0\mbox{--a.s}$. 

\smallskip
We next turn to the characterization of the representative agent optimization problem of the mean--variance criterion:
\b*
J(\beta)
&=&
X_0 + m^\pi(X_0) \Big( b(X_0)
                                     + \int_\R\int_\R \beta(X_0, y)\big(F^\mu_0(y)+zF_1^\mu(y) \big) \mu_0(\mathrm{d}y)
                                              \rho( \mathrm{d} z) 
                            \Big)
\\
&&
-\frac{\sigma(X_0)^2}{2q}
-\frac{m^\pi(X_0)^2}{2q}\int_\R \!\!\Big( \!\int_\R \!\!\beta(X_0,y)zF_1^\mu(y)\mu_0(\mathrm{d} y)
                                                      +\sigma^0(X_0)z 
                                              \Big)^2 \rho(\mathrm{d} z). 
\e*
Since~$q>0$, it is clear that $J$ is a strictly concave function, and that the optimality is characterized by the first order condition which follows from direct calculation of the G\^ateaux derivative of the functional $J$: 
\b*
 qF^\mu_0(\widehat X_0)- F_1^\mu(\widehat X_0) 
 m^\pi(X_0)\big(\sigma^0(X_0)+\int_\R  \pi(X_0,u)F_1^\mu(u)\mu_0( \mathrm{d} u)\big)=0, \, \, \mbox{a.s.}
\e*
In view of \eqref{Fredholm_G}, the last condition is equivalent to the second one in \eqref{eq_Nash}.

\bigskip \noindent {\it Step 2:} By the symmetry of the second equation of \eqref{eq_Nash},  we see that  $F^\mu_0(X_0)=c$ is a constant, $\mu_0\mbox{--a.s}$, which can be determined by direct substitution in \eqref{F0}. This provides $c\big(\frac{1}{m^\pi(X_0)}-\int \pi(X_0,y) \mu_0(dy) \big)=b(X_0)$, a.s. and therefore $c=\E[b(X_0)]$ by taking expectations on both sides. This shows that \eqref{eq_Nash} is equivalent to  
\begin{equation}
\begin{aligned}
 F^{\mu}(X_0,\varepsilon^0)&= \E[b(X_0)]+F_1^\mu(X_0) \varepsilon^0 \, \, \mu_0 \otimes \rho-\mbox{a.s},  \\
  q\E[b(X_0)]&=F_1^\mu(X_0)F_1^\mu(\widehat X_0)\, \, \mu_0\otimes \mu_0-\mbox{a.s},
\end{aligned}
\label{eq_Nash2}
\end{equation} with the restriction \eqref{drift_b} and with $F_1^\mu$ satisfying \eqref{Fredholm_G}. 
As $X_0$ and $\widehat X_0$ are identically distributed,  and by taking the expectation on $F_1^\mu(X_0)$ and $F_1^\mu(X_0)^2$, we see that $\E^{\mu_0}\big[F_1^\mu(X_0)\big]^2=\E^{\mu_0}\big[F_1^\mu(X_0)^2\big]=q \E[b(X_0)].$ Then $\V\mbox{ar}^{\mu_0}[F_1^\mu(X_0)]=0$, and we get 
\b*
\mbox{either}~F_1^\mu(X_0)= \sqrt{q \E[b(X_0)]}, 
~\mu_0\mbox{--a.s.} 
&\mbox{or}&
F_1^\mu(X_0)=-\sqrt{q \E[b(X_0)]},~\mu_0\mbox{--a.s.}
\e*
If $F_1^\mu(X_0)=\sqrt{q \E[b(X_0)]}$, a.s. then the Fredholm  equation \eqref{Fredholm_G} is equivalent to 
$$ \sigma^0(X_0)= \sqrt{q \E[b(X_0)]} \Big( \frac{1}{m^\pi(X_0)}-\widehat \E^{\mu_0}\big[ \pi(X_0,\widehat X_0)\big] \Big) , \, \, \mu_0\mbox{--a.s},  $$
Since $\sigma^0(X_0) \ne 0$, $\mu_0\mbox{--a.s}$ and the drift $b$ satisfies \eqref{drift_b}, the  last equation is equivalent to~$\E[b(X_0)] >0$ and $\sigma^0=\sqrt{\frac{q}{\E[ b(X_0) ]}} b$.  

The alternative case $F_1^\mu(X_0)=-\sqrt{q \E[b(X_0)]}$ is studied similarly.

\ep

\end{appendix}

\bibliographystyle{plain}

\bibliography{MFG_Nizar-Bassou_ArxivVersion}


 






\end{document}